\documentclass[a4paper,11pt,reqno]{amsart}
\usepackage[T1]{fontenc}
\usepackage[utf8]{inputenc}
\usepackage[margin=1.0in]{geometry}
\usepackage{lmodern}
\usepackage[english]{babel}
\usepackage{graphicx}
\usepackage{amsmath}
\usepackage{amsfonts}
\usepackage{amssymb} 
\usepackage{amsthm}
\usepackage{bbm}
\usepackage{bm} 
\usepackage{lipsum}
\usepackage{xcolor}
\usepackage{url}
\usepackage{eurosym}
\usepackage{hyperref}
\usepackage{subcaption}
\usepackage{comment}

\allowdisplaybreaks

\theoremstyle{plain}
\newtheorem{thm}{Theorem}
\newtheorem{lem}[thm]{Lemma}
\newtheorem{prop}[thm]{Proposition}

\newtheorem{rem}[thm]{Remark}
\theoremstyle{definition}



\def \be {\begin{equation}}
	\def \ee {\end{equation}}

\def\red{\textcolor{red}}

\renewcommand{\phi}{\varphi}
\renewcommand{\epsilon}{\varepsilon}
\renewcommand{\tilde}{\widetilde}
\renewcommand{\hat}{\widehat}
\renewcommand{\bar}{\overline}

\begin{document}

	\title{A mean field model for the development of renewable
		capacities}
	\author{Clémence Alasseur}
	\author{Matteo Basei}
	\author{Charles Bertucci}
	\author{Alekos Cecchin}

	\address[C. Alasseur]
	{\newline \indent EDF R\&D and FiME, Laboratoire de Finance des March\'es de l’\'energie
		\newline
		\indent 7 boulevard Gaspard Monge, 
		91120 Palaiseau,  France}
	\email{clemence.alasseur@edf.fr}

	\address[M. Basei]
	{\newline \indent EDF R\&D and FiME, Laboratoire de Finance des March\'es de l’\'energie
		\newline
		\indent 7 boulevard Gaspard Monge, 
		91120 Palaiseau,  France}
	\email{matteo.basei@edf.fr}

	\address[C. Bertucci]
	{\newline \indent Centre de Math\'ematiques Appliqu\'ees, UMR7641 \'Ecole Polytechnique
		\newline
		\indent Route de Saclay, 
		91128 Palaiseau, France}
	\email{charles.bertucci@polytechnique.edu}

	\address[A. Cecchin]
	{\newline \indent Dipartimento di Matematica ``Tullio Levi-Civita'', Università di Padova
		\newline
		\indent Via Trieste 63, 35121 Padova, Italy}
	\email{alekos.cecchin@unipd.it}

		\thanks{C. Alasseur acknowledges support from FIME and ANR ECOREES (project ANR-19-CE05-0042).
		C. Bertucci acknowledges a partial support from the FDD Chair. A. Cecchin benefited from the support of LABEX Louis Bachelier Finance and Sustainable	Growth (through project ANR-11-LABX-0019), ECOREES ANR Project, FDD Chair and Joint Research Initiative FiME}
	
	\date{May 20, 2023. Second version}
	\subjclass{91A16, 
		91B50,  
		49N80.  
	} 
	\keywords{Energy transition, mean field equilibrium, master equation, optimal incentives}

	\begin{abstract}
		We propose a model based on a large number of small competitive producers of renewable energies, to study the effect of subsidies on the aggregate level of capacity, taking into account a cannibalization effect. We  first derive a model to explain how long-time equilibrium can be reached on the market of production of renewable electricity and compare this equilibrium to the case of monopoly. Then we consider the case in which other capacities of production adjust to the production of renewable energies. The analysis is based on a master equation and we get explicit formulae for the long-time equilibria. We also provide new numerical methods to simulate the master equation and the evolution of the capacities.  Thus we find the optimal subsidies to be given by a central planner to the installation and the production in order to reach a desired equilibrium capacity.
	\end{abstract}
	
	\maketitle
	
	
	\section{Introduction}
	As electricity production is responsible for around a third of worldwide CO$_2$ emission, its decarbonization is of course a huge challenge. As such, the share of electricity produced from renewable capacities such as hydro, wind, photovoltaic is widely growing. However, to achieve decarbonization objectives, this share must still grow in a very large proportion. In 2021, the share of all renewable production had reached 30\%\footnote{https://www.iea.org/reports/global-energy-review-2021/renewables}. But many new investments are required in order to achieve a net-zero economy by 2050 as 90\% of global electricity generation should come from renewable sources at that point\footnote{https://www.iea.org/reports/net-zero-by-2050}.\\
	
	Electricity market is characterized by the constraint that production must be equal to the consumption at
	any time. In case of non respect of this constraint, the system can incur a power outage whose consequences might be highly problematic. For example, the total economic cost of the August 2003 blackout in the USA was estimated to be between seven and ten billion dollars \cite{UScouncil}. 
	As electricity can hardly be stored, hydro storage is limited in size, and developing a large fleet of batteries is still highly costly, the power production capacity must be high enough to cope with major peak load events, which can reach extreme levels compared to the average load. The consequence of such constraints on the production system is that some power plants are used rarely (only during extreme peak load) but remain necessary for the system security, and insuring their economical
	viability with energy markets only is not guaranteed. This question has already motivated a great amount
	of economic literature under the name of missing money \cite{Joskow}. That's also why some regulators took the lead on this matter through "strategic" reserves (see for example \cite{stratReserve}) or by the design of capacity remunerations (see for example \cite{Bubliz} for a review of theoretical studies and implementations of capacity remuneration mechanisms). \\
	
	Renewable energies have low variable costs so their introduction has made electricity prices lower, see for example \cite{brown2018capacity}  or\cite{Levin2015}. This effect is also sometimes referred as the cannibalisation effect (see \cite{cannibalisation}). This could reinforce the lack of incentives to invest in new capacities. To which level and at which rhythm new renewable capacity would develop in electricity markets is a key issue to regulators. Indeed, regulators can remove barriers by modifying the market design such as the rules of the markets, the level of competition or by providing financial incentives to the new capacities. The model we develop in this paper intends to provide explicit elements to guide regulators in such issues. \\
	
	We based our model on MFG (Mean Field Games) approach to represent numerous renewable producers who can decide to invest in new capacities. The use of MFG to analyse power system is not new. For example, it has been used to study how consumers can optimise their flexilibities against a dynamic price, see \cite{Couillet2012} for EV (Electrical Vehicle) charging, \cite{Paola2016},  \cite{Gomes2020} or \cite{Alasseur2020} for micro-storages or \cite{Paola2019} or \cite{Bauso} for TCL (Thermostatic Control Load). \\
	
	MFG are dynamic games involving an infinite number of small players. Such problems have been studied for a long time in Economics and a mathematical framework has been proposed in \cite{lasry2007mean,lions2007cours}. One of the key feature of the MFG theory is that it provides a tool to analyze the effect of an aggregate shock (or common noise) on the system, by means of the so-called master equation. When it is well posed, this equation, usually set on the set of probability measures \cite{cardaliaguet2019master}, contains all the information on the MFG. In several situations, the master equation can be posed on a finite dimensional space \cite{bertucci2021monotone},\cite{bitcoin} and it is thus easier to study. In this paper, we adopt a framework which is similar to the one of \cite{bitcoin}. In particular we shall study a master equation whose derivation relies on the expression of dynamical equilibria on markets, even if no precise MFG is introduced. This type of approach is quite natural in Economics like for example equations which result from no-arbitrage assumptions. Indeed, in those situations, it is not necessarily the mathematical problem of the arbitrageurs which is important but rather the equilibrium relations that it implies. \\
	
	In this paper, we analyse the long-term competition between producers who invest in renewable assets taking into account the cannibalisation effect. Moreover, we study the effect of subsidies on the aggregate level of capacity of production of renewable energies. We adopt a master equation formulation as in \cite{bitcoin}. For a given level of subsidies, we are able to explicitly characterise the level of capacities which would develop within the competition framework and compare it to the level of renewable capacities achieved in a monopoly setting. The equilibrium we obtain is explicit but also unique and our model provides a way to analyse the impact of the level of subvention. We prove that competition strengthens the development of renewable achieving a larger total renewable capacities and diminishes the profitability of producers compared to a non-regulated monopoly setting. Numerically, we can analyse the rhythm at which renewable develops and how fast a given renewable capacity target can be achieved. We are also able to compute the optimal levels of subsidies for a central planner wants to achieve a target of renewable capacity while saving the distributed subventions. In particular, we demonstrate that from a regulator point of view, to subsidize the cost of production as a decreasing amount of installed capacities is more efficient than keeping a fixed subvention over time. 
	We provide this study in two cases: one in which the renewable energies are the only adjustable capacities of production on the market and one in which the other capacities of production, the reserve, adjust to the production of renewable energies. When the strategic reserve adapts to the level of renewable capacity, we show in the paper the explicit and unique equilibrium of competitive and monopoly situation.

	Mathematically, this problem can be named as the one of controlling a MFG equilibrium. Even though this problem seems quite natural from the point of view of applications, it has received only few attention for the moment. This may be due to the intrinsic difficulty of the problem, that we shall not enter in in this paper. Here, we shall restrict ourselves to our particular problem and illustrate the results we obtain with numerical simulations.\\
	
	The rest of the paper is organized as follows. In Section \ref{sec:fixedreserve}, we introduce and study a simple model in which the capacity of electric production outside the renewable source is fixed. In this setting, we develop the case with a constant subvention to the production and also with a decreasing subvention with the total renewable capacity. In Section \ref{sec:extension}, we then proceed to study the more involved case in which this capacity evolves with the installed capacity of renewable.

	\section{Renewable capacity development with fixed reserve}
	\label{sec:fixedreserve}
	\subsection{A first general model} Assume that we have a multitude of producers of a renewable energy, that they are symmetric and non-atomic in the sense that every one of them is too small to have an influence on the market. Assume that the total (aggregate) available capacity of renewable energy $K_t$ (in MW) at time $t$ (in years) evolves according to
	\be\label{eq:196}
	\dot{K}_t = - \delta K_t + X_t.
	\ee
	The parameter $\delta >0$ is given and represents the decay of old installations due to time (in years$^{-1}$). The quantity $X_t$ is the variation of capacity, at time $t$, induced by the behaviour of the producers.
	Let $P_t$ be the market remuneration assimilated at the spot price in \euro/MWh and assume that it depends on the total (aggregate) available capacity $K_t$ in a decreasing way:
	\be 
	\label{PK}
	P_t = \frac{p}{K_t +\epsilon},
	\ee
	where $\epsilon$ is a fixed parameter in MW and $p$ a constant in \euro/h.
	This model for the spot price is directly inspired by structural approaches to model electricity prices, such as models developed in \cite{Barlow} or in \cite{Aid_struct}, which represent spot prices as the intersection of the production and demand curves in a stylised way. Because, in our model, the new capacities $X$ are renewable technologies with very low cost of production, large introduction of this type of technology mechanically decreases the spot price most of the time.\\
	
	Let $c_t$ be the cost of production, in \euro/MW, at time $t$.
	We introduce also a parameter $h$ (in hours/year) which accounts for the number of hours of production of the renewable energy, per year. Indeed, renewable technologies have the particularity to depend on meteorological conditions and as such do not run at full capacity all the time.\\
	
	We want to characterize the value of a single unit of production at time $t$, which we denote by $u_t$. Note that, equivalently, it represents the value of the "game" for a producer detaining a single unit of production at time $t$. From the previous assumptions, we can write $u_t$ as the sum of the expected payments given by a unit of production, before it defaults, which happens at rate $\delta > 0$. This yields (denoting by $a^+$ the positive part of $a$) 
	\be\label{eq:197}
	u_t = \mathbb{E}\left[\int_t^{\tau_t}e^{-r(s-t)}h\left(\frac{p}{K_s + \epsilon} -c_s\right)^+ ds\right].
	\ee
	The random variable $\tau_t$ is an exponential random variable of parameter $\delta^{-1}$ which models the time at which the unit of production will default.\\
	
	The strategic equilibrium which takes place in our model can be seen through the link between the equations \eqref{eq:197} and \eqref{eq:196}. Indeed to compute the value of a unit of production, we need to have anticipation on the evolution of the future total capacity installed. But on the other hand, the evolution of the total capacity installed should depend on the value of a unit of production: the more profitable the production of renewable, the more capacity should be installed. To make this link more apparent, we make the assumption that $X_t$ is a function of only the time $t$, the aggregate capacity of production $K_t$ and the value $u_t$ of a unit of production, and we write $X_t = F(t,K_t,u_t)$. We shall come back on this assumption later on.\\
	
	The previous assumption hints to seek $u_t$ as a function of only $t$ and $K_t$ which we note $u_t = U(t,K_t)$. This leads to a sort of dynamic programming formulation for $u_t$ which takes the form
	
	\begin{equation*} 
		U(t,K_t) = \mathbb{E}\left[\int_t^{t + dt \land \tau_t} e^{-r(s-t)}h\left(\frac{p}{K_s + \epsilon} - c_s\right)^+ + \mathbbm{1}_{\{t + dt< \tau_t\}} U(t + dt,K_{t + dt})\right],
	\end{equation*} 
	for $dt > 0$. Assuming that $c_t = c(t,K_t)$, simply computing $\tfrac{d}{dt}U(t,K_t)$,  applying the chain rule and using \eqref{eq:196} and the above equation, we find that $U$ should solve the PDE
	\be \label{met}
	\partial_t U(t,k) + (F(t,k,U(t,k))- \delta k)\partial_k U(t,k) - (r+\delta)U(t,k) + h \left(\frac{p}{k + \epsilon} - c(t,k)\right)^+ = 0
	\ee 
	on the domain $(t,k) \in (0,\infty)\times [0,\infty)$. Note that the previous equation is backward in time, that no boundary condition is needed at $t = \infty$ because of the presence of the discount factor $r$ and that no boundary condition is needed at $k = 0$ provided that $F(t,0,U(t,0))) \geq 0$. In the terminology of the MFG theory, this PDE is the master equation of the problem. \\
	
	Let us note that the well-posedness (existence and uniqueness) of the previous PDE yields the existence and uniqueness of an equilibrium in our model. Indeed, if we are able to compute $U$ through \eqref{met}, then we can also compute the evolution of $K_t$ through \eqref{eq:196}. {We stress that \eqref{met} is not the master equation of a mean field game, because $U$ does not represent the value of an agent in a Nash equilibrium, but the value of a unit. Instead, in our model, \eqref{met} is simply derived by the chain rule, and we call it master equation because it has the same mathematical structure of the mean field game master equation (in one space dimension), so that we easily get well-posedness results. It is in fact an equation for the value of  a unit  in equilibrium, but not a game, and the sole fact we use of MFG theory is the idea that producers are non-atomic (or small), as explained above.} \\
	
	
	\textbf{Equilibrium with a large  number of competitive producers.} 
	We now make two assumptions to make the precedent model more tractable and get, in particular, an explicit expressions for the equilibrium capacity. 	We assume first that the model is stationary, that is that no quantity depends explicitly on the time variable. This leads to the master equation
	\be 
	\label{mat:2}
	-(r + \delta)U(k) + (F(k,U(k))- \delta k)\partial_k U(k) + h \left(\frac{p}{k + \epsilon} - c(k)\right)^+ = 0
	\ee 
	which thus reduces to a singular ODE since only one variable is remaining.\\
	
	We now make the more crucial assumption that, for some constant $\lambda > 0$ and function $\alpha: \mathbb{R}_+ \to \mathbb{R}$, 
	\be \label{relationF}
	F(k,U) = \lambda(U - \alpha(k)).
	\ee 
	\begin{rem}
		This assumption has the important property that higher the value of a unit of production, the higher is the installation of new capacity and is justified by the following model.
	\end{rem}
	We now present a simple model with $N$ small competitive producers to justify the equations \eqref{relationF} and \eqref{mat:2} when $N$ is large. 
	Consider a producer whose capacity of production in (MW) at time $t$ is given by $S_t$.
	Let 
	\begin{itemize}
		\item $\alpha(k)$ be the cost of installation, in \euro/MW, given the installed (aggregate) capacity $k$
		\item $\beta_N$ be an adjustment cost, in \euro year/MW$^2$ which represents a friction to large volume of installation.
	\end{itemize}
	We shall come back later on why it is natural that $\beta$ depends on the number $N$ of producers.  
These costs depend on the type of the production technology. Later, we will consider the possibility for a central planner of subsidizing the cost of production $c$ and the cost of installation $\alpha$.
\\
We assume that the producer controls its (new) installed capacity $S_t$ through
\begin{equation*}  
	\dot{S}_t = -\delta S_t + y_t
\end{equation*}  
where $(y_t)_{t\geq 0}$ is its control, which represents the intensity of installation, in MW/year. Its reward is then 
\be 
\label{cost:ind}
\int_0^{\infty} e^{-rt} \Big( S_t h \left(\frac{p}{K_t + \epsilon}-c(K_t)\right)^+ - \alpha(K_t)  y_t - \beta_N y_t^2 \Big) dt.
\ee
which is in infinite horizon with a discount factor $r$ and, at every time $t$ (in years) given by the spot price minus the cost of production, multiplied by the capacity $S_t$ and the hours per year of production $h$, while it pays the installation cost, multiplied by the intensity of installation, and an adjustment cost. 

We assume, as above, that the producer neglects the impacts its production has on the aggregate production $K_t$, meaning that is is small compared to the total multitude of producers (as in mean field models). This implies, in particular, that $K_t$ is treated as a function of time only in the maximization of the above reward.
The maximization problem, in infinite horizon, is easily solved by means of Pontryagin's maximum principle: the Hamiltonian is 
\[
H(S,y, v)= (-\delta+ S)v +Sh \left(\frac{p}{K_t + \epsilon}-c(K_t)\right)^+ -\alpha(K_t)  y - \beta_N y^2, 
\]
where $v$ is the adjoint variable, which solves the ODE $\dot{v} = - \frac{\partial H}{\partial S} +rv$ and the optimal control maximized the Hamiltonian.  
Thus the  adjoint equation associated to this problem  is given by
\be 
\label{eq16}
\dot{v}_t = \delta v_t - h\left(\frac{p}{K_t + \epsilon}-c(K_t)\right)^+ + r  v_t 
\ee 
and the maximization leads to 
\be \label{eq:resultmax}
y_t = \frac{v_t - \alpha(K_t)}{2 \beta_N},
\ee 
which does not depend on $S_t$. From \eqref{eq16}, expressing $v_t= V(K_t)$ as a function of $K_t$, we then deduce (using the chain rule) that 
\begin{equation*}  
	-(r + \delta)V(K_t) + (\dot{K}_t)\partial_k V(K_t) + h\left(\frac{p}{K_t + \epsilon}-c(K_t) \right)^+ = 0.
\end{equation*} 
On the other hand, we can compute the evolution of the aggregate capacity $K_t$ as the sum of  the evolution of the capacity of all players. Let $\overline{S}_t$ be the new aggregate available capacity of the total multitude of producers, i.e. $\overline{S}_t =\sum_{i=1}^N S^i_t$, with $S_0=0$, thus the total available capacity is  
\be
\label{init:SK}
K_t= \overline{S}_t + k_0 e^{-\delta t},
\ee 
so that $K_0 =k_0$ is the initial capacity.  
This leads to 
\begin{align*}
	\dot{K}_t &= N y_t - \delta \sum_{i=1}^NS^i_t -\delta k_0 e^{-\delta t}\\
	& = N \frac{V(K_t) - \alpha(K_t)}{2\beta_N} - \delta K_t.
\end{align*}
Assuming that, in the limit $N \to \infty$, $\beta_N$ scales as $N\beta$ for some $\beta > 0$, we finally obtain that
\begin{equation*}  
	\dot{K}_t = \frac{V(K_t) - \alpha(K_t)}{2\beta} - \delta K_t.
\end{equation*}  
Hence, setting $\lambda = (2\beta)^{-1}$, we recover the required form $F(k,U) = \lambda(U - \alpha(k))$.
\begin{rem}
	Recall that in this $N$ players framework, $\beta_N$ is an adjustment cost. Hence it is natural that this cost grows with the number of producers (or at least of active producers) in the model. Indeed if $\beta_N$ is small, then the adjustment cost is also small and we can expect that a few big producers can adapt their production optimally quite easily. However, as $\beta_N$ grows, this adjustment becomes more and more important and the slowness of the producers to adjust allow rooms for smaller producers to install some small, non zero, capacity of production. Somehow this is what the relation \eqref{eq:resultmax} yields.
\end{rem}

\subsection {Constant subvention: competitive and monopoly equilibrium}
\label{subsec:const_sub}
We now assume that $c$ and $\alpha$ are \emph{constant} functions of $k$. These assumptions are also probably simplistic, however, they allow for several explicit computations which are helpful for some developments of this paper. We come back later, in \S \ref{sec:nonconstant}, on possible generalizations of the dependence of these costs on $k$.
This finally yields the master equation 
\be 
\label{master1}
-(r+\delta)U + \left(\lambda(U - \alpha) -\delta k\right) U' +  h \left(\frac{p}{k+\epsilon} -c\right)^+ =0,
\ee
and the total available capacity evolves according to 
\be 
\label{Kt}
\dot{K}_t = \lambda(U(K_t)-\alpha) - \delta K_t. 
\ee 
Note that the decay $\delta$ of  capacities due to time can also be interpreted such as in \cite{bitcoin} as the rate of technological progress of the renewable technology. This means that in this model the costs of production and of installation mechanically decrease through time as the technology improves.\\

The next results uses tools developed for mean field game theory and states well-posedness of the master equation as well as an important monotonicity property.

\begin{prop}
	\label{prop1}
	There exists a unique globally Lipschitz solution to \eqref{master1}. It is monotone decreasing and there exists a unique stationary state $k^*$ for the model. 
	
	Moreover, if either $\alpha \leq 0$ or $h c + \alpha(r+\delta) \leq \frac{p}{\epsilon}$, then $U(0) \geq \alpha$, which implies that $K_t\geq 0$ if $k_0 \geq 0$, and $K_t>0$ if $k_0>0$. 
\end{prop}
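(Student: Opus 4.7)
The plan is to treat \eqref{master1} as a first-order quasilinear ODE, exploit its characteristic structure to get well-posedness and monotonicity, and then derive the stationary state and the sign of $U(0)$ by elementary analysis.

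First, for existence and uniqueness, I would work along the characteristic flow $\dot{k}_t = \lambda(U(k_t)-\alpha) - \delta k_t$ starting from $k_0 = k$. The master equation is then equivalent to
\[
\frac{d}{dt}\bigl[e^{-(r+\delta)t}U(k_t)\bigr] = -e^{-(r+\delta)t}\, h\left(\frac{p}{k_t+\epsilon}-c\right)^+,
\]
so any bounded solution must satisfy the self-consistent representation
\[
U(k) = \int_0^\infty e^{-(r+\delta)t}\, h\left(\frac{p}{k_t+\epsilon}-c\right)^+ dt.
\]
I would set up a Banach fixed-point argument on the complete metric space of Lipschitz, monotone non-increasing functions bounded by $h(p/\epsilon - c)^+/(r+\delta)$, using the discount factor $r+\delta$ to obtain contraction; the Lipschitz dependence of the flow $k_t$ on the input $U$ follows from Grönwall's lemma.

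Second, monotonicity and the stationary state. Given two initial conditions $k_1 < k_2$, the flows satisfy $k_t^{k_1} < k_t^{k_2}$ for all $t\geq 0$ by the comparison principle for the Lipschitz ODE in $k$, and since the source $(p/(k+\epsilon)-c)^+$ is non-increasing in $k$, the representation formula yields $U(k_1) \geq U(k_2)$. For the equilibrium, setting $\dot{K}=0$ in \eqref{Kt} gives $U(k^*) = \alpha + \delta k^*/\lambda$, and plugging into \eqref{master1} evaluated at $k^*$ reduces to the scalar equation
\[
(r+\delta)\left(\alpha + \frac{\delta k^*}{\lambda}\right) = h\left(\frac{p}{k^*+\epsilon}-c\right)^+.
\]
The left-hand side is strictly increasing in $k^*$ and unbounded, the right-hand side is non-increasing and bounded, so there is a unique intersection $k^*\geq 0$.

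Third, the sign of $U(0)$ and invariance of $\{K\geq 0\}$. When $\alpha \leq 0$, the inequality $U(0) \geq 0 \geq \alpha$ is immediate from the integral representation, whose integrand is non-negative. In the case $hc + \alpha(r+\delta) \leq p/\epsilon$, I would argue by contradiction: if $U(0) < \alpha$, then evaluating \eqref{master1} at $k=0$ together with the monotonicity bound $U'(0) \leq 0$ forces $(r+\delta)U(0) \geq h(p/\epsilon-c)^+$, which combined with the parameter condition contradicts $U(0) < \alpha$. Once $U(0) \geq \alpha$ is established, the dynamics \eqref{Kt} satisfy $\dot{K}_t|_{K_t=0} = \lambda(U(0) - \alpha) \geq 0$, so the non-negative half-line is forward invariant; if $k_0 > 0$, uniqueness of solutions for the Lipschitz ODE precludes $K_t$ from reaching zero in finite time. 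The main technical obstacle is the well-posedness step: the transport coefficient $\lambda(U-\alpha) - \delta k$ vanishes at $k = k^*$, making the ODE degenerate at the equilibrium, and one must show that the fixed-point operator nonetheless sends globally Lipschitz functions into globally Lipschitz functions with a uniform constant. The integral representation (which never evaluates $U'$) bypasses this singularity, reducing the Lipschitz bound to a Grönwall estimate on the characteristic flow rather than a direct analysis of the PDE at the stationary point.
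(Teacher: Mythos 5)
Your treatment of the stationary state, of monotonicity, and of the sign of $U(0)$ is sound and essentially coincides with the paper's: the contradiction argument at $k=0$ using $\lambda(U(0)-\alpha)U'(0)\geq 0$ is exactly the one given there, and the reduction of the equilibrium to $(r+\delta)(\alpha+\delta k^*/\lambda)=h(p/(k^*+\epsilon)-c)^+$ with one increasing and one non-increasing side is the paper's computation. The genuine divergence, and the genuine gap, is in the well-posedness step. The paper does not attempt a fixed-point construction: it verifies the monotonicity condition
\[
\langle (F,G)(u,k)-(F,G)(\tilde u,\tilde k),\,(u-\tilde u,k-\tilde k)\rangle\leq-\lambda|u-\tilde u|^2
\]
for $F(k,U)=-\lambda(U-\alpha)+\delta k$, $G(k,U)=-\delta U+h(p/(k+\epsilon)-c)^+$ and then invokes the theory of monotone solutions of \cite{bertucci2021monotone,bitcoin}, which gives \emph{global} existence and uniqueness of a Lipschitz decreasing solution without any smallness restriction.

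Your Banach fixed-point argument, as stated, does not close. The map $V\mapsto\Phi(V)(k)=\int_0^\infty e^{-(r+\delta)t}h(p/(k^V_t+\epsilon)-c)^+\,dt$ does preserve the class of bounded, Lipschitz, non-increasing functions (indeed, since $V$ is decreasing the flow is an $e^{-\delta t}$-contraction in the initial condition, which gives a uniform Lipschitz bound on $\Phi(V)$ — this part is fine). But the contraction estimate fails: controlling $|k^{V_1}_t-k^{V_2}_t|$ by Gr\"onwall yields, even after exploiting monotonicity of $V_1$, a bound of order $\tfrac{\lambda}{\delta}\|V_1-V_2\|_\infty$, so that
\[
\|\Phi(V_1)-\Phi(V_2)\|_\infty\leq\frac{hp}{\epsilon^2}\cdot\frac{\lambda}{\delta}\cdot\frac{1}{r+\delta}\,\|V_1-V_2\|_\infty,
\]
and the constant $\tfrac{hp\lambda}{\epsilon^2\delta(r+\delta)}$ is not less than $1$ for general (or for the paper's own numerical) parameters. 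Without an extra idea — vanishing viscosity, the link to the Hamilton--Jacobi equation $U=\partial_kV$, or the doubling-of-variables monotonicity argument the paper uses in its two-dimensional proposition — the Picard iteration only gives existence under a smallness condition on the data, which is strictly weaker than the claim. The integral representation does bypass the degeneracy at $k^*$, as you say, but the obstacle it does not bypass is that the self-consistency map is not a contraction; uniqueness likewise does not follow from the representation alone, since the flow in the representation depends on the unknown $U$ itself.
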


\begin{proof}
	We may rewrite Equation \eqref{master1} in the form 
	\[
	-rU - F(k, U)  \tfrac{d}{dk} U + G(k,U)=0,  
	\]
	where $F(k,U)= -\lambda(U-\alpha) +\delta k$ and 
	$g(k,U)= -\delta U +  h \big(\frac{p}{k+\epsilon} -c\big)^+$. Thanks to \cite{bertucci2021monotone, bitcoin}, existence and uniqueness of a Lipschitz and monotone decreasing solution hold true because of the monotonicity property of the couple $(F,G)$
	\[
	\langle (F,G)(u,k) - (F,G) (\tilde{u}, \tilde{k}), (u-\tilde{u}, k-\tilde{k}) \rangle \leq - \lambda |u-\tilde{u}|^2, \qquad \forall
	u, \tilde{u}, k,\tilde{k}, 
	\]
	which is easily verified in our case. Existence and uniqueness of the stationary state is a consequence of existence and uniqueness of a decreasing solution to the master equation.   
	
	To show that $U(0) \geq \alpha$, assume by contradiction that $U(0) < \alpha$. The equation for $\bar{U} = U - \alpha$ is 
	\[
	-(r+\delta) \bar{U} + (\lambda \bar{U} -\delta k) \bar{U}' + h \left(\frac{p}{k+\epsilon} -c\right)^+ -\alpha(r+\delta) =0 , 
	\]
	which, computed in $k=0$, gives 
	\be 
	-(r+\delta) \bar{U}(0) + \lambda \bar{U}(0) \bar{U}'(0) + h \left(\frac{p}{\epsilon} -c\right)^+ -\alpha(r+\delta) =0.
	\ee
	The first term is strictly positive ($-(r+\delta)\bar{U}(0) >0$), while the second is positive, since $U$ is decreasing ($\bar{U}(0) \bar{U}'(0) \geq 0$). Under the assumption, the last term is positive, 
	$h \left(\frac{p}{k+\epsilon} -c\right)^+ -\alpha(r+\delta) \geq 0$, whence the contradiction.
\end{proof}

\textbf{Stationary state and convergence of competitive market:} We here compute explicitly the equilibrium capacity for the above model given by \eqref{master1}-\eqref{Kt}. 
%
%
%
%
%
%
%
%
The stationary state is such that $\dot{K}=0$, that is, $\lambda(U(k^*)-\alpha) =0$. 
Inserting this in \eqref{master1}, we obtain the following system for the unknowns $(k^*, U(k^*))$:
\be 
\label{system:agents}
\begin{cases}
	\lambda(U(k^*)-\alpha) -\delta k^*=0, \\
	(r+\delta)U(k^*)= h \left(\frac{p}{k^*+\epsilon}-c \right)^+.
\end{cases}
\ee
We assume that $c\epsilon <p$. If $\frac{p}{k^*+\epsilon}-c \leq 0$, that is, $k^*\geq \frac{p-c\epsilon}{c}$ (if $c>0$), then we get $k^* = -\frac{\lambda \alpha}{\delta}$, which is not allowed, as $\alpha\geq 0$ and $k\geq 0$. If $c\leq0$, then the argument of the positive part is clearly positive. On the converse, if $c>0$ and $k^* < \frac{p-c\epsilon}{c}$ then the system  leads to the second order equation 
\be
\label{eq:2}
\delta k^2 + \left(\delta \epsilon +\alpha \lambda+ \frac{hc \lambda}{r+\delta} \right) k  +\alpha \lambda \epsilon +\frac{hc \lambda \epsilon}{r+\delta}- \frac{hp \lambda}{r+\delta} =0,
\ee
which admits a unique positive solution, if $\alpha  \epsilon < \frac{hp-hc\epsilon}{r+\delta}$, given by 
\[
k^*= \frac{ -\delta\epsilon-\lambda\alpha -\frac{hc \lambda}{r+\delta}+ \sqrt{\left(\delta\epsilon -\lambda\alpha - \frac{hc \lambda}{r+\delta}\right)^2 +4 \frac{hp \delta \lambda}{r+\delta}}}{2\delta} .
\]
Let us denote $\bar c = hc + (r+\delta)\alpha$ a quantity homogeneous to the cost in \euro.year$^{-1}$ of 1 MW of a new capacity. Indeed, 1 MW of new capacity installed at time $t=0$ leads to installation costs $\alpha$ plus the costs of production weighted by the decay factor of the capacity: $\int_0^{\infty} e^{-rt} \Big( e^{-\delta t} h c\Big)dt$, i.e. $h c. (r+\delta)^{-1}$ euros.
We see that $k^*$ is a \emph{decreasing} function of
\[
\bar{c} := hc+ (r+\delta) \alpha,
\]
given, if $\bar{c}\varepsilon <hp$, by 
\be
\label{kstar}
k^*(\bar{c})= \frac{ -\delta\epsilon-\frac{\bar{c} \lambda}{r+\delta}+ \sqrt{\left(\delta\epsilon  - \frac{\bar{c} \lambda}{r+\delta}\right)^2 +4 \frac{\delta \lambda h p}{r+\delta}}}{2\delta}.
\ee
It can be shown that $k^*(\bar{c}) < \frac{hp}{\bar{c}} -\varepsilon < 
\frac{p}{c} -\epsilon$, if $\alpha\geq 0$. It is interesting to analyse the sensitivity of $k^*$ with respect to the parameters of the problem. 
\begin{itemize}
	\item First, the higher the costs (of production $c$, as well as of construction $\alpha$), the less the capacity is developed, which is reasonable.
	\item In addition, $k^*$ increases with $p$, which means that the renewable capacity developed more when the spot market offers better rewards. 
	\item Finally, when the friction to installation $\beta$ is large, so $\lambda$ is small, no new renewable capacity is developed as the optimal capacity $k^*$ goes to zero. 
\end{itemize}
Therefore we assume that 
\begin{align}
	&\alpha\geq 0 \label{eq:admi_alpha}, \\
	&\bar{c} = hc+\alpha(r+\delta) < \frac{hp}{\epsilon}. \label{eq:admi_c}
\end{align}
These two assumptions are very natural. The first assumption directly comes from the fact that installing new capacities is costly and therefore $\alpha>0$. The second one makes the hypothesis that the global annual costs $\bar{c}$ are inferior to the remuneration a new capacity would receive from the spot market if no capacity were installed, which seems a necessary condition.  Indeed, this second assumption implies that if only one MW of renewable exists in the market, it should be able to found its remuneration on the spot market. 
Under \eqref{eq:admi_alpha} and \eqref{eq:admi_c}, we always have (if $K_0 < k^*$, so that $\dot{K}_t >0$)
\begin{equation*} 
	P_t -c = \frac{p}{K_t +\epsilon} -c > \frac{p}{k^*(\bar{c}) + \epsilon} -c >0,
\end{equation*}
so that we may remove the positive part. \\

We collect the above in the following result, where we also consider the problem of convergence of $K_t$ to $k^*$, as $t\rightarrow \infty$.

\begin{thm}
	\label{prop:exp-conv}
	If \eqref{eq:admi_alpha} and \eqref{eq:admi_c} hold, then the unique stationary equilibrium for the model \eqref{master1}-\eqref{Kt} is given by $k^*(\bar{c})$ in \eqref{kstar}.
	
	Let $K_0 < k^*$ and  $L$ be the Lipschitz constant of $U$ (we have 
	$L\leq \frac{2}{\epsilon^2 \delta}$ ). Then 
	$K_0 <K_t <k^*$ for any $t$ and 
	\be 
	(k^*-k_0) e^{-(\lambda L+\delta)t} \leq k^*-K_t\leq (k^*-k_0) e^{-\delta t}.
	\ee 
	In particular, $k^*$ is not reached in finite time. 
\end{thm}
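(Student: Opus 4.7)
The plan is to exploit that, under the admissibility assumptions \eqref{eq:admi_alpha}--\eqref{eq:admi_c}, the positive part in the master equation can be dropped, so that the aggregate dynamics reduces to the autonomous one-dimensional ODE $\dot K_t = f(K_t)$ with $f(k) := \lambda(U(k) - \alpha) - \delta k$. Uniqueness of the stationary equilibrium has in essence already been carried out before the statement: $f(k^*) = 0$ combined with the master equation \eqref{master1} yields the system \eqref{system:agents}, which after elimination of $U(k^*)$ collapses to the quadratic \eqref{eq:2}, whose unique positive root under \eqref{eq:admi_c} is \eqref{kstar}.

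To prove the confinement $K_0 < K_t < k^*$, I would first observe that $f$ is strictly decreasing on $\mathbb{R}_+$: indeed $U$ is monotone decreasing by Proposition \ref{prop1}, and $-\delta k$ is decreasing. Since $f(k^*) = 0$, this forces $f > 0$ on $[0, k^*)$, so that $\dot K_t > 0$ whenever $K_t < k^*$; at the same time, uniqueness of solutions to the Lipschitz ODE forbids $K$ from crossing the equilibrium $k^*$. Together these two facts give strict monotonicity of $K$ and confinement in the open interval $(K_0, k^*)$ for every $t > 0$.

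The Lipschitz bound on $U$ is the most delicate step. My plan is to use the Feynman--Kac type representation underlying \eqref{master1}, namely $U(k) = \int_0^\infty e^{-(r+\delta)s} h\bigl(\tfrac{p}{K_s + \epsilon} - c\bigr)\,ds$ with $K_0 = k$, and to compare two trajectories $K_s, \tilde K_s$ starting from $k < \tilde k$. Because $U$ is decreasing, the integral equation satisfied by $\tilde K_s - K_s$ shows that $0 < \tilde K_s - K_s \leq e^{-\delta s}(\tilde k - k)$ (the contribution of $\lambda(U(\tilde K_s) - U(K_s))$ has the favourable sign). Inserting this bound into the representation and using $|(k+\epsilon)^{-1} - (\tilde k+\epsilon)^{-1}| \leq \epsilon^{-2}|k - \tilde k|$ then yields an explicit Lipschitz constant of the claimed form.

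Finally, writing $\eta_t := k^* - K_t > 0$, we have $\dot\eta_t = f(k^*) - f(K_t) = f'(\xi_t)\,\eta_t$ for some $\xi_t \in (K_t, k^*)$ by the mean value theorem, with $f'(\xi) = \lambda U'(\xi) - \delta \in [-\lambda L - \delta, -\delta]$ since $U' \in [-L, 0]$. The differential inequality
$$ -(\lambda L + \delta)\,\eta_t \;\leq\; \dot\eta_t \;\leq\; -\delta\,\eta_t $$
integrates via Gronwall's lemma to the double-sided exponential bound in the statement, and strict positivity of the lower envelope for every finite $t$ shows that $k^*$ is never reached. The main technical step is the quantitative Lipschitz estimate; the rest is routine one-dimensional ODE analysis.
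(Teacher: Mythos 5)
Your proposal is correct and follows essentially the same route as the paper: confinement of $K_t$ in $(K_0,k^*)$ via strict monotonicity of the drift $f(k)=\lambda(U(k)-\alpha)-\delta k$, and the two-sided exponential bound obtained from the monotonicity of $U$ (giving the rate $-\delta$) and its Lipschitz continuity (giving the rate $-(\lambda L+\delta)$). The only minor caveats are that the paper writes the two differential inequalities directly, which works for a merely Lipschitz $U$, whereas your mean value theorem step tacitly assumes $U\in C^1$; and that the paper never actually proves the parenthetical bound $L\le 2/(\epsilon^2\delta)$ --- your trajectory-comparison argument is a sensible way to get a quantitative constant, though as sketched it yields $hp/(\epsilon^2(r+2\delta))$ rather than the stated value.
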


\begin{proof}
	Since $U$ is decreasing, the drift $f(k) = \lambda(U(k)-\alpha) -\delta k$ is strictly decreasing. Thus $f(k) > f(k^*) = 0$ and then  $K_t$ is strictly increasing and strictly concave, so clearly $K_t \leq k^*$. 
	As $U$ is decreasing, we get 
	\begin{align*}
		\dot{K}_t &= \lambda(U(K_t) -\alpha) -\delta K_t \\
		&\geq \lambda (U(k^*) -\alpha ) - \delta k^* -\delta (K_t -k^*) \\
		&= -\delta (K_t -k^*).
	\end{align*}
	As $k^*$ is the stationary state, we get 
	\[
	\begin{cases}
		\frac{d}{dt} (k^* - K_t) \leq - \delta (k^* -K_t), \\
		(k^*-K_t)(0) = k^*-k_0,
	\end{cases}
	\]
	which implies $k^*-K_t\leq (k^*-k_0) e^{-\delta t}$. On the other hand, the Lipschitz continuity of $U$ yields 
	\begin{align*}
		\dot{K}_t &= \lambda(U(K_t) -\alpha) -\delta K_t \\
		&\leq  \lambda (U(k^*) + L| K_t- k^*| -\alpha ) - \delta k^* -\delta (K_t -k^*) \\
		&= -(\lambda L +\delta) (K_t -k^*).
	\end{align*}
	Therefore we obtain  $\frac{d}{dt} (k^* - K_t) \geq  -(\lambda L + \delta) (k^* -K_t)$, which gives 
	\[ k^*-K_t\geq  (k^*-k_0) e^{-(\lambda L + \delta) t}. \]
\end{proof}

\textbf{Monopoly:} we consider now a monopoly regime, in which there is a unique producer to develop this renewable capacities. 
Thus in particular $S_t = K_t$ above, and \eqref{PK} is assumed in the beginning (without the positive part), that is, the unique agent maximizes
\begin{equation*} 
	\int_0^{\infty} e^{-rt} \Big( K_t h\Big(\frac{p}{K_t +\epsilon}-c \Big) - \alpha  \dot{X}_t - \beta (\dot{X}_t)^2 \Big) dt.
\end{equation*} 
Note that this problem is similar to the one introduced to derive \eqref{master1}, except for the fact that here $N =1$ and there is only one producer. Then the adjoint equation becomes 
\[
\dot{u}_t = (r+\delta) u  - h\frac{p\epsilon}{(K_t+\epsilon)^2}+hc,
\]
which gives
\[
u_t = \int_t^{\infty} e^{-(r+\delta)(s-t)} h\Big(\frac{p\epsilon}{(K_t+\epsilon)^2}-c\Big) ds.
\]
This provides a different master equation:
\be 
\label{master2}
-(r+\delta)U + \left(\lambda( U - \alpha) -\delta k\right) U' +  \frac{hp \epsilon}{(k+\epsilon)^2} -hc =0.
\ee

\begin{prop}
	There exists a unique globally Lipschitz solution to \eqref{master2}. It is monotone de-
	creasing and there exists a unique stationary state $k^*_{mono}$ for the model. If $\bar{c}\varepsilon <hp$ then $k^*_{mono}$ is the unique positive root of the polynomial 
	\begin{equation} 
		\label{poly3}
		\delta k^3 + \left( 2\epsilon \delta +\frac{\lambda \bar{c}}{r+\delta}  \right) k^2 
		+\left( \delta \epsilon^2 +2\epsilon \frac{\lambda \bar{c}}{r+\delta} \right) k
		+ \frac{\epsilon \lambda}{r+\delta} ( \bar{c} \epsilon - h p).
	\end{equation}
	
\end{prop}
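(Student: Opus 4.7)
The plan is to mirror the proof of Proposition \ref{prop1}, replacing the piecewise-affine profit term $h\bigl(\tfrac{p}{k+\epsilon}-c\bigr)^+$ by its smooth monopoly analogue $\tfrac{hp\epsilon}{(k+\epsilon)^2}-hc$. First, I would rewrite \eqref{master2} in the abstract form $-rU - F(k,U)\partial_k U + G(k,U) = 0$ with the same drift $F(k,U) = -\lambda(U-\alpha) + \delta k$ as before and $G(k,U) = -\delta U + \tfrac{hp\epsilon}{(k+\epsilon)^2} - hc$. A direct expansion yields
\[
\bigl\langle (F,G)(u,k) - (F,G)(\tilde u, \tilde k),\, (u-\tilde u, k-\tilde k)\bigr\rangle = -\lambda (u-\tilde u)^2 + \left(\tfrac{hp\epsilon}{(k+\epsilon)^2} - \tfrac{hp\epsilon}{(\tilde k+\epsilon)^2}\right)(k-\tilde k),
\]
the cross terms $\pm \delta (u-\tilde u)(k-\tilde k)$ cancelling exactly as in the competitive case. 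Since $k\mapsto hp\epsilon/(k+\epsilon)^2$ is strictly decreasing on $[0,\infty)$, the last product is non-positive, so the strong monotonicity of the couple $(F,G)$ holds with constant $\lambda$. Applying the well-posedness results of \cite{bertucci2021monotone, bitcoin} then yields a unique globally Lipschitz, monotone decreasing solution $U$ and, as in Proposition \ref{prop1}, a unique stationary state $k^*_{mono}$.

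To compute $k^*_{mono}$, I would use that the capacity dynamics in the monopoly setting are still $\dot K_t = \lambda(U(K_t)-\alpha) - \delta K_t$ (the Pontryagin analysis produces the same linear feedback up to a reinterpretation of $\beta$), so $\dot K = 0$ forces $U(k^*_{mono}) = \alpha + \delta k^*_{mono}/\lambda$. Plugging this into \eqref{master2} annihilates the drift term and leaves
\[
-(r+\delta)\alpha - \tfrac{(r+\delta)\delta}{\lambda}\, k^*_{mono} + \tfrac{hp\epsilon}{(k^*_{mono}+\epsilon)^2} - hc = 0.
\]
Multiplying through by $-\lambda(k^*_{mono}+\epsilon)^2/(r+\delta)$, merging the two cost contributions via $\bar c = hc + (r+\delta)\alpha$, and expanding $(k^*_{mono}+\epsilon)^2$, I expect to recover precisely the cubic \eqref{poly3}.

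For uniqueness of a positive root of \eqref{poly3}, denote the polynomial by $P(k)$. Under the natural positivity $\bar c \geq 0$ (which follows from \eqref{eq:admi_alpha} together with the non-negativity of the production cost), the coefficients of $k^3$, $k^2$ and $k$ are strictly positive, while the hypothesis $\bar c \epsilon < hp$ makes the constant term strictly negative. Hence
\[
P'(k) = 3\delta k^2 + 2\left(2\epsilon\delta + \tfrac{\lambda\bar c}{r+\delta}\right) k + \left(\delta\epsilon^2 + \tfrac{2\epsilon\lambda\bar c}{r+\delta}\right) > 0 \quad \text{for all } k\geq 0,
\]
so $P$ is strictly increasing on $[0,\infty)$; combined with $P(0)<0$ and $P(+\infty)=+\infty$ this gives a unique positive root, which necessarily coincides with $k^*_{mono}$. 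The only step that requires actual thought is the monotonicity verification with the new nonlinearity; once the decreasing character of $k\mapsto hp\epsilon/(k+\epsilon)^2$ is exploited, both the abstract well-posedness and the sign-based uniqueness of the positive root of the cubic reduce to routine algebra.
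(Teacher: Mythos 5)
Your proposal is correct and follows essentially the same route as the paper: the paper proves well-posedness ``as in Proposition \ref{prop1}'' (i.e.\ via the same monotonicity of the pair $(F,G)$, which you verify explicitly using that $k\mapsto hp\epsilon/(k+\epsilon)^2$ is decreasing), and derives the cubic from the same stationary system \eqref{system:mono}. The only cosmetic difference is at the last step, where the paper invokes Descartes' rule of signs while you argue via $P'(k)>0$ and $P(0)<0$ — which additionally uses $\bar c\geq 0$, harmless under the paper's standing assumptions $\alpha\geq 0$, $c>0$.
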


\begin{proof} 
	Existence and uniqueness of a Lipschitz and monotone solution is shown as in Prop. \ref{prop1}. This implies that there exists a unique stationary state $k^*$. 
	%
	%
	%
	To find the stationary state, we have to solve the system 
	\be 
	\label{system:mono}
	\begin{cases}
		\lambda(U(k^*)-\alpha) -\delta k^*=0, \\
		(r+\delta)U(k^*)= \frac{hp\epsilon}{(k^*+\epsilon)^2}-hc ,
	\end{cases}
	\ee
	which gives the equation 
	\[
	\delta k^3 + \left( 2\epsilon \delta +\frac{\lambda hc}{r+\delta} +\lambda \alpha \right) k^2 
	+\left( \delta \epsilon^2 +2\epsilon \left(\frac{\lambda hc}{r+\delta} +\lambda \alpha \right) \right) k
	+\epsilon^2 \left(\frac{\lambda hc}{r+\delta} +\lambda \alpha \right) 
	- \frac{h p \lambda \epsilon}{r+\delta}.
	\] 
	Thanks to Descartes' rule of signs, this admits a unique positive solution if $c\epsilon <p$ and
	$
	\alpha \epsilon < \frac{hp-hc\epsilon}{r+\delta};
	$ 
	equivalently, this writes $\bar{c}\varepsilon <hp$ and $k^*$ is a root of \eqref{poly3}.  
\end{proof}


We now compare the two regimes. The following result shows that competition enables to reach higher installation capacities compared to monopoly. This also shows that as expected competition drives lower profitability for producers because the value of a single unit of production is then lower under competition compared to monopoly. As a result, the spot price is also higher under the monopoly compared to competition.

\begin{thm}	
	\label{lem:4}
	For every admissible value of the parameters, we have 
	\begin{equation*} 
		k^*_{mono} < k^*_{agents} ,
	\end{equation*} 
	where $k^*_{mono}$ and $k^*_{agents}$ are the equilibrium capacities in the monopoly regime and in the competitive small agents regime, respectively. 
\end{thm}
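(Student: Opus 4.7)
The plan is to exploit the common structure of the two stationary systems \eqref{system:agents} and \eqref{system:mono} and reduce the comparison to a monotonicity argument involving a single scalar equation. The only difference between the competitive and monopoly regimes is the instantaneous revenue term appearing in the second equation; the first equation, $\lambda(U(k^*)-\alpha)-\delta k^*=0$, is identical in both.

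First I would use the common first equation to eliminate the unknown $U(k^*)$, writing $U(k^*)=\alpha+\delta k^*/\lambda$. Substituting this into the second equation of each system, and putting all the ``cost'' terms on the left, one obtains
\[
\phi(k)\;:=\;(r+\delta)\alpha+\frac{(r+\delta)\delta}{\lambda}k+hc,
\]
and the two equilibria are characterized as the solutions of
\[
\phi(k^*_{agents})=\frac{hp}{k^*_{agents}+\epsilon},\qquad
\phi(k^*_{mono})=\frac{hp\epsilon}{(k^*_{mono}+\epsilon)^2}.
\]
Note that $\phi$ is strictly increasing in $k$, whereas the two right-hand sides are strictly decreasing in $k$ on $[0,\infty)$. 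Under the admissibility conditions \eqref{eq:admi_alpha}-\eqref{eq:admi_c} and $\bar c\varepsilon<hp$, the existence of unique positive equilibria $k^*_{agents}$ and $k^*_{mono}$ has already been established; in particular both are strictly positive.

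The key observation is then the pointwise comparison
\[
\frac{hp\epsilon}{(k+\epsilon)^2}\;=\;\frac{\epsilon}{k+\epsilon}\cdot\frac{hp}{k+\epsilon}\;<\;\frac{hp}{k+\epsilon}\qquad\text{for every }k>0,
\]
since $\epsilon/(k+\epsilon)<1$. Economically, this is exactly the fact that a monopolist internalizes the cannibalization effect and therefore faces a smaller marginal revenue than a price-taking competitive producer.

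To conclude, I evaluate the monopoly scalar equation at $k^*_{agents}$: the strict inequality above gives
\[
\phi(k^*_{agents})=\frac{hp}{k^*_{agents}+\epsilon}\;>\;\frac{hp\epsilon}{(k^*_{agents}+\epsilon)^2},
\]
so that $\phi-\mathrm{RHS}_{mono}$ is strictly positive at $k^*_{agents}$ and vanishes at $k^*_{mono}$. Since $\phi$ is increasing and the monopoly right-hand side is decreasing, $\phi-\mathrm{RHS}_{mono}$ is strictly increasing, hence $k^*_{mono}<k^*_{agents}$. I do not expect any serious obstacle: the only delicate point is making sure both equilibria lie in the region where the positive part in \eqref{master1} is active (so that the second equation of \eqref{system:agents} really is $(r+\delta)U(k^*)=h(p/(k^*+\epsilon)-c)$ without the $(\cdot)^+$), which is guaranteed by the admissibility hypotheses already used in Theorem~\ref{prop:exp-conv}.
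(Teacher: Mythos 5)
Your proof is correct and takes essentially the same route as the paper's: both use the common first equation to express $U(k^*)$ as the strictly increasing function $\alpha+\delta k^*/\lambda$, view the second equation as a strictly decreasing revenue curve, and conclude from the pointwise inequality $\frac{hp\epsilon}{(k+\epsilon)^2}<\frac{hp}{k+\epsilon}$ for $k>0$. Your write-up is slightly more explicit about the monotone-intersection step and about the positive part being active, but the argument is the same.
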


\begin{proof} 
	The stationary states  $k^*_{agents}$ and $k^*_{mono}$ are the unique solutions of systems \eqref{system:agents} and \eqref{system:mono}, respectively. The first equation is the same, and we can write $U(k^*)$ as a strictly increasing function of $k^*$; while in the second equation $U(k^*)$ is written as a strictly decreasing function of $k^*$. Comparing the rhs of these two equations, we note that  
	\[
	\frac{p\epsilon}{(k+\epsilon)^2} < \frac{p}{k+\epsilon} \qquad 
	\mbox{ if } k>0,
	\] 
	which implies the claim. 
\end{proof}

\subsection{Optimize subventions to control the competitive equilibrium capacity}
\label{sec:constantsub_optimizesub}
We consider now the problem where a central planner subsidizes the new capacities in order to achieve some target of renewable capacities. The central planner can either provide subvention at the installation of the new capacity, providing $\alpha_{sub}$, or when the capacity is running, providing $c_{sub}$ to the producer. The central planner has to determine the optimal level of subvention it provides to achieve its target while constraining the amount of subsidies it provides. In the model $\alpha$ is simply replaced by $\alpha -\alpha_{sub}$,  and the same for $c$. Thus $\alpha$ is a cost of installation and $\alpha_{sub}$ the subvention to installation (in \euro/MW), while $c$ is a cost of production and $c_{sub}$ is the subvention to production. 

Let us denote by $\bar{c}_{sub} = h c_{sub} + \alpha_{sub} (r+\delta)$ the global annual subvention in (\euro/MW)/year provided by the central planner to a new capacity. In the model, the producer is indifferent whether this global subvention should be paid once at the installation ($c_{sub}=0$), or during the lifetime of the production ($\alpha_{sub}=0$) or a combination of both. \\

From the point of view of the central planner, we consider the problem of choosing the parameters $\alpha_{sub}$ and $c_{sub}$ in order to reach a desired value $\bar{k}$ for the stationary production $k^*$. The central planner then aims at minimizing the distance between $k^*$ and $\bar{k}$, while paying reasonable subsidies. 
%
	Looking at \eqref{cost:ind} and \eqref{init:SK}, the total subvention to installation provided by the central planner at time $t$ is $\alpha_{sub}\sum_i \dot{X}_{i,t}$ and the total subvention to production is $h c_{sub} \overline{S}_t$. We have $\dot{X}_t = \dot{S}_t+\delta S_t$ and $\overline{S}_t = K_t -k_0 e^{-\delta t}$. The last equality gives $\dot{\overline{S}}_t = \dot{K}_t +\delta k_0 e^{-\delta t}$. Therefore, $\sum_i \dot{X}_{i,t} = \dot{\bar{S}}_t+\delta \bar{S}_t = \dot{K}_t + \delta K_t$. Hence, the central planner aims at minimizing
	%
	\be 
	\label{cost25}
	J_{sub}(\alpha_{sub}, c_{sub}) = 
	\mu (k^*(\bar{c}_{sub} ) - \bar{k})^2 
	+ \alpha_{sub} \int_0^\infty e^{-rt} ( \dot{K}_t + \delta K_t ) dt
	+ h c_{sub}\int_0^\infty e^{-rt} (K_t -k_0 e^{-\delta t}) dt, 
	\ee 
	where $\mu$ is a factor, in \euro/MW$^2$.
Integration by parts gives 
\begin{align*}
	&\alpha_{sub} \int_0^\infty e^{-rt} ( \dot{K}_t + \delta K_t ) dt
	+ h c_{sub}\int_0^\infty e^{-rt} (K_t -k_0 e^{-\delta t}) dt \\
	& = -\alpha_{sub} k_0 
	+ (\alpha_{sub} r +\alpha_{sub} \delta + h c_{sub}) \int_0^\infty e^{-r t}  K_t dt -k_0 \frac{h c_{sub}}{r+\delta} \\
	& = \bar{c}_{sub}  \int_0^\infty e^{-r t}  K_t dt - k_0 \frac{\bar{c}_{sub}}{r+\delta}
\end{align*}
and therefore the cost \eqref{cost25} is equal to 
\be 
\label{cost20bar}
\bar{J}_{sub}(\bar{c}_{sub}) = \mu (k^*(\bar{c}_{sub} ) - \bar{k})^2  
+ \bar{c}_{sub}  \int_0^\infty e^{-r t}  K_t dt - k_0 \frac{\bar{c}_{sub}}{r+\delta},
\ee
which depends only on $\bar{c}_{sub}$.  In our model, the producer and the regulator are sensitive to the global amount of subvention and are indifferent whether is is paid at the construction or during the exploitation of the power plant.\\

Let's note that equation \eqref{cost20bar} can be also rewritten as 
\be 
\label{cost20bar_bis}
\bar{J}_{sub}(\bar{c}_{sub}) = \mu (k^*(\bar{c}_{sub} ) - \bar{k})^2  
+ \bar{c}_{sub}  \int_0^\infty e^{-r t}  \bar{S}_t dt,
\ee
where very clearly it appears that the objective of the central planner is a compromise between achieving the target capacity $\bar{k}$ and the total subvention it provides to the total new capacity $\bar{S}$.
Recall that we assume that $\alpha, \alpha_{sub}$ and $c,c_{sub}$ are constants. 
The admissible domain, given by \eqref{eq:admi_alpha}-\eqref{eq:admi_c},  is for $\alpha -\alpha_{sub} \geq 0$ and $(\bar{c} -\bar{c}_{sub}) \epsilon < hp$, that is 
\begin{align*}
	&\alpha_{sub} \leq \alpha, \\
	&\bar{c}_{sub} = hc_{sub} +\alpha_{sub} (r+\delta) > \bar{c}-\frac{hp}{\epsilon}.
\end{align*} 
This admissible domain implies that the subvention to the installation could not be superior to the installation costs, meaning that the producer cannot earn free money by only installing a new capacity without running it. Let's recall that $\bar{c}-\frac{hp}{\epsilon}<0$ by assumption \eqref{eq:admi_c}, which means that the all positive subsidies $\bar{c}_{sub}$ to production are admissible. Therefore the admissible domain covers a large enough domain for our applications.

Setting $\bar{U}= U-\alpha$, equation \eqref{master1} becomes 
\be 
\label{Ubar}
-(r+\delta) \bar{U} + (\lambda \bar{U} -\delta k) \bar{U}' + \frac{h p}{k+\epsilon} -(hc-hc_{sub})-(r+\delta ) (\alpha-\alpha_{sub}) =0.
\ee 
We can reformulate the problem to depend on
\[
\bar{c} = h c + \alpha(r+ \delta) , \qquad 
\bar{c}_{sub} = h c_{sub} + \alpha_{sub} (r+ \delta).
\]
\\
The master equation \eqref{Ubar}, as well as the dynamics for $K_t$ in \eqref{Kt}, depends only on $\bar{c}-\bar{c}_{sub}$, and thus also the stationary state, which is given by 
\be
\label{kstarsub}
k^*(\bar{c}_{sub})= \frac{ -\delta\epsilon-
	\frac{(\bar{c} - \bar{c}_{sub}) \lambda}{r+\delta}+ \sqrt{\left(\delta\epsilon  - \frac{(\bar{c} - \bar{c}_{sub}) \lambda}{r+\delta}\right)^2 +4 \frac{\delta \lambda h p}{r+\delta}}}{2\delta}.
\ee
We see that $k^*(\bar{c}_{sub})$ is an increasing function of $\bar{c}_{sub}$.
Hence the cost $\bar{J}$ in \eqref{cost20bar} depends only on 
$\bar{c}_{sub}$.
Under monopoly, using  Theorem \ref{lem:4}, we note that more subvention would be required to achieve the same target $\bar k$.\\

\textbf{Numerical simulations}
We solve numerically  first the master equation  \eqref{Ubar} and then the ODE \eqref{Kt} for a large enough initial value. We assume $k_0=30$ GW and repeat for any value of $\bar{c}$ constant. There are two ways to solve \eqref{Ubar}: either we discretise the HJ equation
\begin{equation*}
	-r V +  \Big( -\delta k V +\lambda \frac{V^2}{2} +\log(k+\epsilon) -\bar{c} k\Big) =0,
\end{equation*}
solve it by the Newton method and then take the discrete derivative, or we solve directly \eqref{Ubar}, basically by the Euler implicit scheme backward, giving the terminal value at $k^*$; the result is approximately the same. More precisely, given the value $U(k^*)$, we discretise the interval $[0,k^*]$ and the value $u_i \approx U(k_i)$, knowing $u_{i+1}$ is given implicitly by 
\begin{equation*} 
	-(r+\delta) u_{i} 
	+ (\lambda( u_{i} -\alpha) -\delta k_i)
	\frac{u_{i+1}-u_{i}}{\Delta k}  
	+ \bigg( \frac{p}{k_i +\epsilon} -c\bigg)_+ =0.
\end{equation*}
Such equation is solvable for $u_i$ because $U$ is decreasing and, as a consequence,  $\lambda( U(k) -\alpha) -\delta k) >0$ for $0\leq k < k^*$. In fact, $u_i$ is given by 
\begin{equation*} 
	u_i = \frac{- \Delta k (r+ \delta) +\lambda u_{i+1} +\delta k_i + 
		\sqrt{(-\Delta k (r+ \delta) +\lambda u_{i+1} +\delta k_i)^2 
			+4\lambda  \bigg[ \Delta k \bigg( \frac{p}{k_i +\epsilon} -\bar{c} \bigg) -\delta k_i u_{i+1} \bigg] 
	}}{ 2\lambda}.
\end{equation*}


We obtain the following results fixing the parameters 
\begin{align*}
	&r=0.1 \mbox{year}^{-1}, \qquad \delta=\ln(2)/10 \mbox{year}^{-1},  \qquad \lambda=5 \frac{MW^2}{\mbox{\euro  year}} , \qquad \epsilon=0.1 MW, \qquad h = 3000 \mbox{hours.year}^{-1}\\
	&\alpha=1400 \mbox{\euro}/kW , \qquad c=15 \mbox{\euro}/ MW \mbox{h}, \qquad p = 6.5\cdot 10^6 \mbox{ \euro/ h}, \qquad k_0= 30 GW, 
	\qquad \bar{k} = 60 GW
\end{align*}
where $r, \lambda$ are taken from \cite{ABP}, as well as $\alpha$ and $c$, while $\delta$ comes from \cite{AD} and means that that the plant
loses 50\% of its value over 10 years.








We fix a reserve level at $Y_0 = 70$ GW which enters in the spot price as 
$\frac{p}{K_t + Y_0 + \epsilon}$; in fact, this corresponds to a new value for $\epsilon$ which becomes $Y_0 + \epsilon$ in previous calculations. 
The quantity $\bar{c}$ is $\bar{c} = hc + \alpha(r+\delta)= 282,040$ \euro/(MW year) and we fix $\mu= 1000$ \euro/MW. \\

The power market at time $t=0$ is therefore a system with 30 GW existing renewable capacities, 70 GW of reserves such as thermal plants which complement the renewable production. The central planner aims at doubling the renewable capacities to 60 GW. $Y_0$ and $k_0$ are directly inspired by French power system\footnote{The French TSO, Transport Service Operator, publishes the installed capacities for France in https://www.services-rte.com/fr/visualisez-les-donnees-publiees-par-rte/capacite-installee-de-production.html} and $p$ is set such that the spot price is around 60 or 50 \euro/MWh which is a common price level observed in the spot market in 2021. \\

The value of $\bar{c}_{sub}$ for which $k^*(\bar{c}_{sub}) = \bar{k}=60$ GW is 133,400 \euro/(MW year) which implies that the central planner would have to subsidize the producers around half of their cost to reach the target of 60GW of renewable in the system. 
The minimum of the objective function of the central planner $J$ corresponds to a slighly lower value of subvention $\hat{\bar{c}}_{sub} = 132,500$ \euro/(MW year). The corresponding equilibrium value is $k^*(\hat{\bar{c}}_{sub}) =59.2$ GW, which is 800 MW less then the target $\bar{k}=60$ GW. 
The difference between the target and the capacity achieved at the terminal time $T= 50$ years is $k^*(\hat{\bar{c}}_{sub})- K_{50} =2.4$ MW. 
The results are shown in figures \ref{fig:fixe_reserve_kstar} and \ref{fig:fixe_reserve_ktPt}. Figure  \ref{fig:fixe_reserve_ktPt} shows the time-dynamics of the installed capacity: the installed capacities gain 20 GW in only tree years and double in less than 10 years. Changing the market design and enabling a larger remuneration through the spot market (i.e. by increasing $p$) for renewable capacities enables to speed the rhythm at which the new renewable capacities are developed and also reduced the amount of subsidies. This is illustrated in figure \ref{fig:fixe_reserve_ktPt_1.5p}. 





\begin{figure}
	\centering
	\includegraphics[scale=0.3]{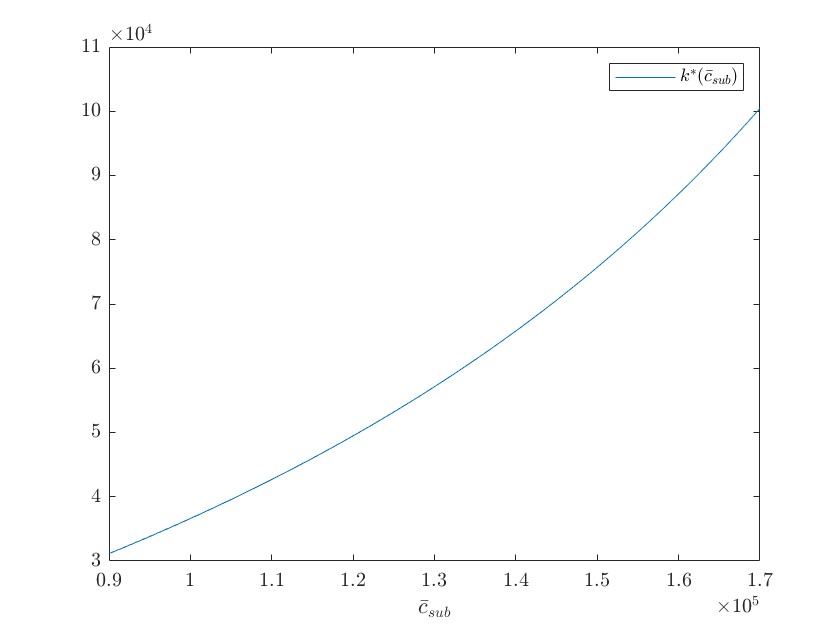}
	\caption{$k^\star$ in MW with respect to $\bar{c}_{sub}$ in \euro/MW/year}
	\label{fig:fixe_reserve_kstar} 
\end{figure}

\begin{figure}
	
	\centering
	\begin{subfigure}{.5\textwidth}
		\centering
		\includegraphics[scale=0.3]{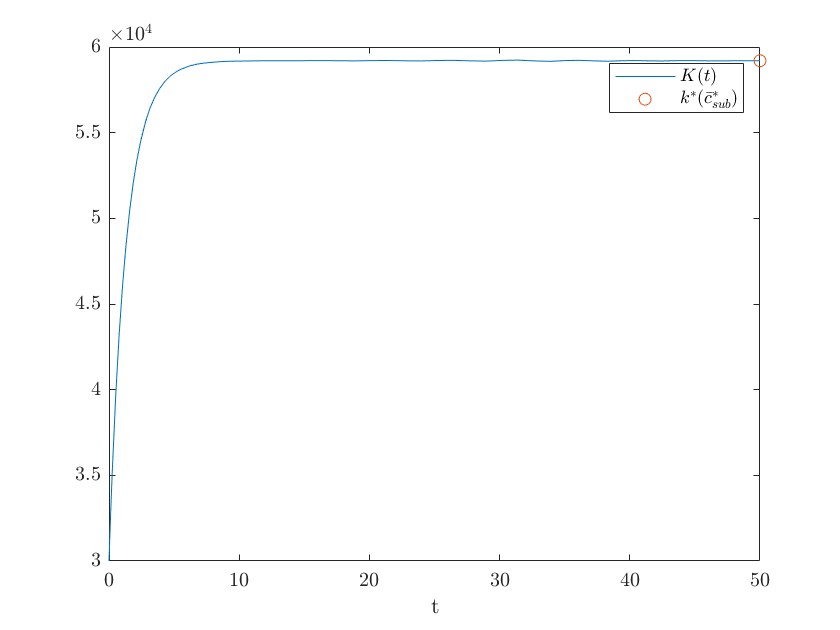}
	\end{subfigure}%
	\begin{subfigure}{.5\textwidth}
		\centering
		\includegraphics[scale=0.3]{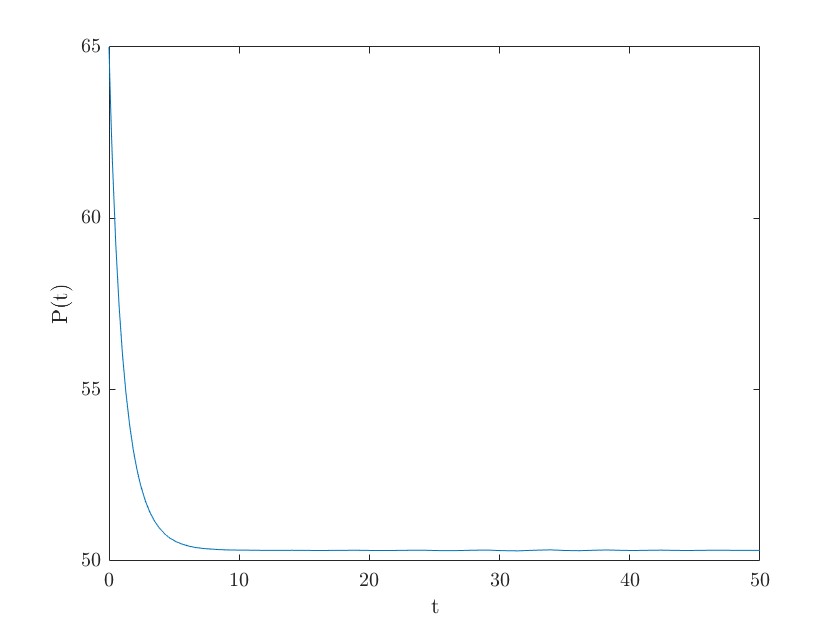}
	\end{subfigure}
	
	\caption{Time evolution of the installed capacity $k_t$ in MW (left) and of spot price $P_t$ in \euro/MWh (right)}
	\label{fig:fixe_reserve_ktPt}
\end{figure}

\begin{figure}
	\centering
	\begin{subfigure}{.5\textwidth}
		\centering
		\includegraphics[scale=0.3]{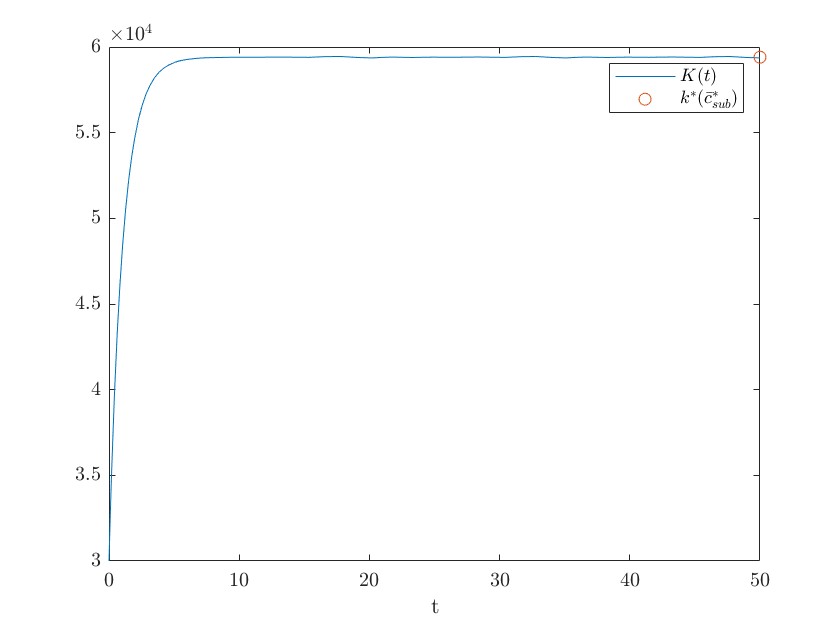}
	\end{subfigure}%
	\begin{subfigure}{.5\textwidth}
		\centering
		\includegraphics[scale=0.3]{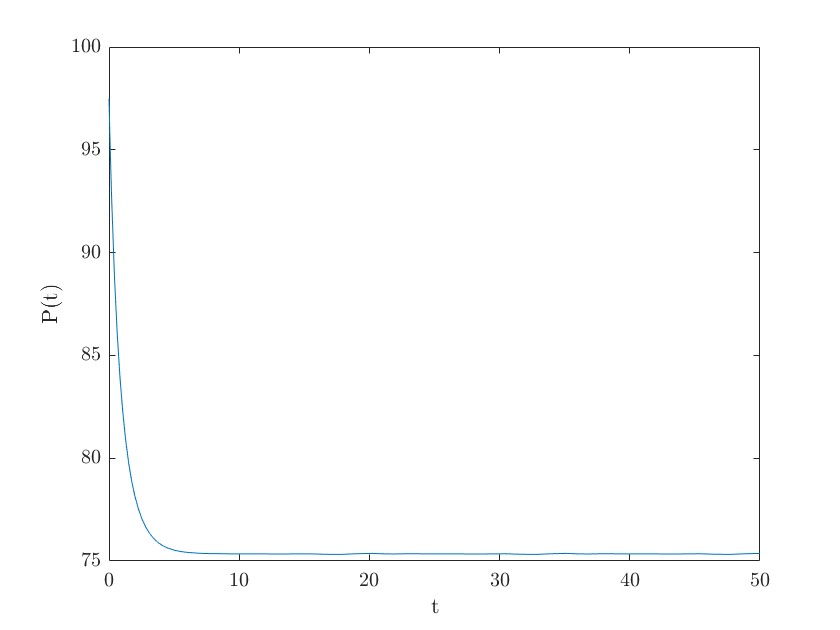}
	\end{subfigure}
	
	\caption{Time evolution of the installed capacity $k_t$ in MW (left) and of spot price $P_t$ in \euro/MWh (right) with higher spot remuneration, ie with $p=1.5 * 6.5.10^6$}
	\label{fig:fixe_reserve_ktPt_1.5p}
\end{figure}

\subsection{Non constant cost of production and subsidies}
\label{sec:nonconstant} 

In the previous section we considered the case of constant cost of production $c$, subsidy to production $c_{sub}$, cost of installation $\alpha$, subsidy to installation $\alpha_{sub}$ but decreasing along time with respect to the rate of technological progress $\delta$. 
We consider here the more general case of cost and subvention to production which depend on the total capacity in a decreasing way: the higher the capacity, the lower the cost of production and the subsidy. The cost and subvention to installation are still assumed to be constant with respect to the total capacity.
The most general case we can consider the case in which the cost of production is a function $\varphi(k)$ and the subsidy to production is a function $\psi(k)$. Thus the dynamics of the aggregate capacity, in the competitive regime, is still given by \eqref{Kt}, while the master equation \eqref{master1} becomes 
\be 
\label{master1bis}
-(r+\delta)U + \left(\lambda( U - \alpha) -\delta k\right) U' +  h \left(\frac{p}{k+\epsilon} - \phi(k) +\psi(k) \right)^+ =0,
\ee

\begin{thm}
	If the function $\frac{p}{k+\epsilon} - \phi(k) +\psi(k)$ is decreasing in $k$, then the master equation \eqref{master1bis} admits a unique Lipschitz solution, it is monotone, and there exists a unique equilibrium $k^*$, which is such that   
	\be 
	\label{equi:2}
	\left(\frac{\delta}{\lambda} k^* +\alpha \right)(r+\delta) =  h \left(\frac{p}{k^*+\epsilon} - \phi(k^*) +\psi(k^*) \right)^+.
	\ee 
	Moreover, there is exponential convergence to the equilibrium. 
\end{thm}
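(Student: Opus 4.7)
The plan is to adapt the three-step approach already established in Proposition \ref{prop1}, in the derivation of \eqref{kstar}, and in Theorem \ref{prop:exp-conv}, to this more general setting. The only structural change is that $c$ and $c_{sub}$ are now replaced by the $k$-dependent functions $\phi$ and $\psi$, and the standing monotonicity assumption on $\frac{p}{k+\epsilon} - \phi(k) + \psi(k)$ is exactly what is needed to preserve the underlying monotone structure.

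For step one (well-posedness, Lipschitz regularity and monotonicity of $U$), I would rewrite \eqref{master1bis} as $-rU - F(k,U)\partial_k U + G(k,U) = 0$ with $F(k,U) = -\lambda(U-\alpha) + \delta k$ and $G(k,U) = -\delta U + h\bigl(\tfrac{p}{k+\epsilon} - \phi(k) + \psi(k)\bigr)^+$, and then invoke the framework of \cite{bertucci2021monotone,bitcoin} exactly as in Proposition \ref{prop1}. The only point to verify is the monotonicity inequality for $(F,G)$: expanding the inner product, the cross terms $\delta(k-\tilde k)(u-\tilde u)$ produced by $F$ and by $G$ cancel, and one is left with
\begin{equation*}
\langle (F,G)(u,k) - (F,G)(\tilde u,\tilde k),\, (u-\tilde u, k - \tilde k)\rangle = -\lambda (u - \tilde u)^2 + h\bigl[(\tfrac{p}{k+\epsilon} - \phi(k) + \psi(k))^+ - (\tfrac{p}{\tilde k+\epsilon} - \phi(\tilde k) + \psi(\tilde k))^+\bigr](k - \tilde k).
\end{equation*}
Since the positive part of a decreasing function is again decreasing, the assumption on $\phi,\psi$ forces the last bracket times $(k-\tilde k)$ to be non-positive, yielding the required bound $\leq -\lambda (u-\tilde u)^2$.

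For step two (equilibrium equation and its uniqueness), I would impose $\dot K_t = 0$ in \eqref{Kt}, which gives $U(k^*) = \alpha + \tfrac{\delta}{\lambda} k^*$, and substitute this identity into \eqref{master1bis} at $k = k^*$. The drift term vanishes and one obtains exactly \eqref{equi:2}. Uniqueness of $k^*$ is then immediate by a sign argument: the left-hand side of \eqref{equi:2} is strictly increasing in $k^*$, while the right-hand side is non-increasing under the standing assumption, so at most one intersection is possible. Existence of such a $k^* \geq 0$ follows from the existence of the monotone, Lipschitz solution $U$ of step one together with the intermediate value theorem applied to $\lambda(U(k)-\alpha) - \delta k$.

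For step three (exponential convergence), I would copy verbatim the comparison argument of Theorem \ref{prop:exp-conv}: since $U$ is monotone decreasing and Lipschitz with some constant $L$, the drift $f(k) = \lambda(U(k) - \alpha) - \delta k$ is strictly decreasing, vanishes only at $k^*$, and can be sandwiched by $-\delta(k-k^*)$ on one side and by $-(\lambda L + \delta)(k - k^*)$ on the other; Grönwall's inequality then yields
\begin{equation*}
(k^* - k_0)e^{-(\lambda L + \delta)t} \leq |k^* - K_t| \leq (k^* - k_0)e^{-\delta t}
\end{equation*}
when $K_0 < k^*$, and the symmetric bounds hold when $K_0 > k^*$. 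I do not expect any serious obstacle beyond the monotonicity check of step one: once that is carried out, the rest is an essentially verbatim transcription of the constant-cost proofs of \S\ref{subsec:const_sub}, with $c - c_{sub}$ replaced by $\phi(k) - \psi(k)$.
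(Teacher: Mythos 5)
Your proposal is correct and follows essentially the same route as the paper: well-posedness via the monotone framework of \cite{bertucci2021monotone,lions2007cours}, the equilibrium equation obtained by setting $\dot K_t=0$ and substituting into \eqref{master1bis} with uniqueness from the increasing/decreasing sign comparison, and exponential convergence copied from Theorem \ref{prop:exp-conv}. In fact you supply more detail than the paper does, since you explicitly verify the monotonicity inequality for $(F,G)$ (the cancellation of the cross terms and the sign of the new $k$-dependent term) where the paper only cites the general framework.
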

\begin{proof}
	Well-posedness is again a consequence of \cite{lions2007cours}, where existence and uniqueness is proved in the monotone case. Imposing $\dot{K}_t=0$ in the equilibrium, we find $\lambda( U(k^*) - \alpha) -\delta k^*=0$ and, putting this in the master equation \eqref{master1bis} we get $(r+\delta)U(k^*)= h \left(\frac{p}{k^*+\epsilon} - \phi(k^*) +\psi(k^*) \right)^+$. Thus we obtain \eqref{equi:2}, which is uniquely solvable since the left hand side is a strictly increasing function of $k^*$, while the left hand side is decreasing by assumption. Exponential convergence is proved as in Proposition \ref{prop:exp-conv}.  
\end{proof} 

In order to get explicit results, we consider specific shapes of the cost of production $\varphi(k)$ and the subvention to production $\psi(k)$: 
\[
\varphi(k) =c, \qquad \psi(k) = \frac{c^1_{sub}}{k+\epsilon} +c^2_{sub}.
\] 
This two functions correspond to constant cost of production  and to decreasing subvention to production with respect to the capacity. The central planner has then to choose the two parameters $c^1_{sub}$ and $c^2_{sub}$ in order to reach the desired equilibrium capacity. The subvention to production is not constant here, but depends on the total capacity $K_t$ in a decreasing way, if we impose $c^1_{sub} >0$. To keep the model tractable, we assume that it depends ok $K_t$ in the same way as the spot price. This expression is easy to understand for a producer as it means that the regulator proposes to subsidize production with a constant part and with a variable part which is proportional to the spot price. 
In fact, with respect to the first model given by \eqref{master1}, $p$ is replaced by $p+c^1_{sub}$ and $c$ by $c-c^2_{sub}$. Thus we obtain the constraints 
\begin{align*}
	\alpha_{sub}&\leq \alpha, \qquad c^1_{sub} >0\\ 
	\bar{c}^2_{sub} &= hc^2_{sub} + \alpha_{sub} (r+\delta) > \bar{c} - \frac{h}{\epsilon} (p+c^1_{sub}) 
\end{align*}
(where again $\bar{c}=hc  + \alpha (r+\delta)$) and the equilibrium capacity is given by 
\be
\label{kstarsub2}
k^*(c^1_{sub}, \bar{c}^2_{sub})= \frac{ -\delta\epsilon-
	\frac{(\bar{c} - \bar{c}^2_{sub}) \lambda}{r+\delta}+ \sqrt{\left(\delta\epsilon  - \frac{(\bar{c} - \bar{c}^2_{sub}) \lambda}{r+\delta}\right)^2 +4 \frac{\delta \lambda h (p+c^1_{sub})}{r+\delta}}}{2\delta}.
\ee
It is now a function of two variables and we see that it is an increasing function of both $c^1_{sub}$ and $\bar{c}^2_{sub}$. This implies that, to reach a desired equilibrium $\bar{k}$, the larger $c^1_{sub}$ can be chosen, the lower $\bar{c}^2_{sub}$, and vice versa. 

The central planner aims at reaching a desired equilibrium capacity $\bar{k}$. She can subsidize either the quantity $\bar{c}^2_{sub}$, as in the case of constant subsidy, or the quantity $c^1_{sub}$. Since the equilibrium capacity satisfies \eqref{eq:2} (with $p$ replaced by $p +c^1_{sub}$ and $c$ by $c-c^2_{sub}$ therein), we get that the optimal parameters $\bar{c}^2_{sub}$ and $c^1_{sub}$ are connected by the linear combination: 
\[
h(c^1_{sub}+p ) = (c-\bar{c}^2_{sub})(\epsilon+\bar{k})+ \frac{r+\delta}{\lambda}\delta\bar{k} ( \bar{k}+\epsilon).
\]
Similarly to \eqref{cost25}, the total cost of subvention paid by the central planner is then 
\[
J_{sub}(\alpha_{sub}, c^1_{sub}, c^2_{sub}) = \alpha_{sub} \int_0^\infty e^{-rt} ( \dot{K}_t + \delta K_t ) dt
+ h \int_0^\infty e^{-rt} \Big( c^2_{sub} +\frac{c^1_{sub}}{K_t +\epsilon} \Big) (K_t -k_0 e^{-\delta t}) dt, 
\]
which, integrating bu parts, becomes 
\[
\bar{J}_{sub}(\bar{c}^2_{sub}, c^1_{sub}) =  \bar{c}^2_{sub}  \int_0^\infty e^{-r t}  K_t dt - k_0 \frac{\bar{c}^2_{sub}}{r+\delta} 
+h c^1_{sub} \int_0^\infty e^{-rt}  \frac{K_t -k_0 e^{-\delta t}}{K_t +\epsilon} dt. 
\]

\bigskip

We observe that the function $\frac{K_t -k_0 e^{-\delta t}}{K_t +\epsilon}$ is less than 1, and thus much smaller than the quantity $K_t$ in the first integral. This implies that  the central planner is always better off to subsidize $c^1_{sub}$ with respect to $\bar{c}^2_{sub}$. Therefore, in this model, the optimal choice is to take $\bar{c}^2_{sub}$ as small as possible and for the regulator to subsidize production conversely with the installed renewable capacity. 


Fixing the parameters as in the previous subsection, we obtain that, if the subsidy $\bar{c}^2_{sub}=0$, then the optimal $c^{1,*}_{sub}$ to reach the desired equilibrium is $5.78 \cdot 10^6$ \euro/h, which is a fraction 0.89 of the spot price $p$. As observed above, with a larger $p$, the rhythm at which the renewable capacity reaches the equilibrium increases, i.e. $K_t$ gets close to the equilibrium $\bar{k}=60 GW$ in a shorter time.

\begin{figure}
	\centering
	\includegraphics[scale=0.3]{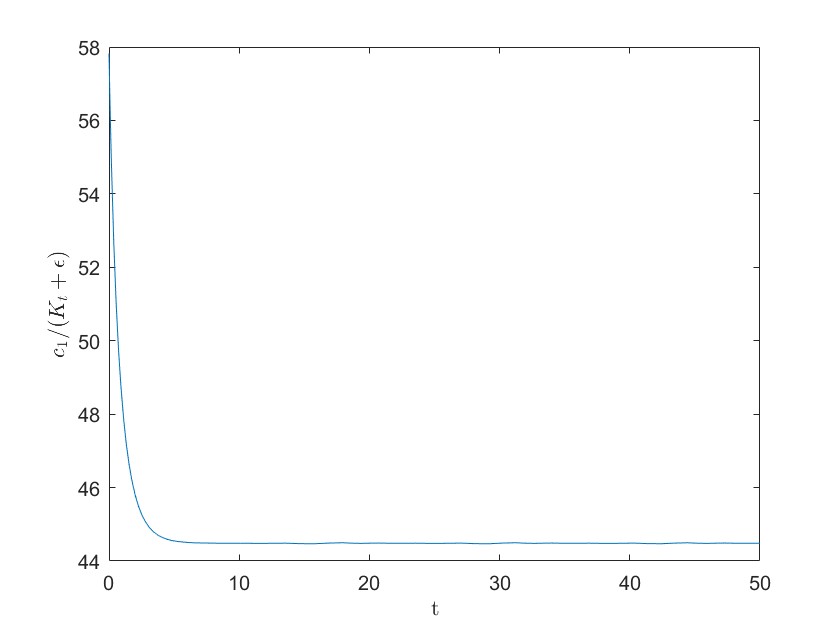}
	
	\caption{Time evolution of the optimal subsidy $\frac{c^{1,*}_{sub}}{K^*_t+\epsilon}$, in \euro/MWh.}
\end{figure}

\section{Renewable capacity development with adaptating capacity of reserves}
\label{sec:extension}
We introduce in the model another variable $Y_t$ which models the reserve. 
As before, $K_t$ is the available capacity of renewable energies and $(K_t, Y_t)$ evolve according to
\be 
\begin{split}
	\dot{K}_t &= \lambda( U(K_t, Y_t ) -\alpha) - \delta K_t, \\ 
	\dot{Y}_t &= f(K_t, Y_t). 
\end{split}
\ee 
The dynamics of the reserve is thus a deterministic function of the capacity $K_t$, given by the drift $f$: a central planner  automatically adapts the level of reserve to the installed capacities of renewable. This is perfectly known by renewable producer. Again, we assume that the spot price is in the form 
\begin{equation*} 
	P_t = \frac{p}{K_t +Y_t +\epsilon},
\end{equation*} 
and we consider here the case of constant costs of production and of installation. 
Thus, in line with the arguments of the previous section, we get that  $U(k,y)$ solves the PDE
\be 
\label{master:Y}
-(r+\delta) U + (\lambda (U-\alpha) -\delta k) \partial_k U + f(k,y) \partial_y U + h\left( \frac{p}{k+y +\epsilon} -c\right)^++ =0 .
\ee
and $U(K_t, Y_t)$ is given by
\begin{equation*} 
	U(K_t,Y_t) = \int_t^\infty e^{-(r+\delta)(s-t)} h\left(\frac{p}{K_s +Y_s  +\epsilon} -c\right)^+ds.
\end{equation*} 
Equation \eqref{master:Y} is a nonlinear PDE in space dimension 2. Because of the nonlinear term in the drift and of the non-divergence form, existence of classical solutions is not directly covered by existing theories. We provide below a proof of uniqueness which exploits the monotone structure and is inspired by the proofs of \cite{bertucci2021monotone, lions2007cours}. The questions of existence of classical (or Lipschitz) solutions seems a challenging mathematical question, beyond the scope of this paper.  
In fact, it might not be true for any choice of regular $f$, but may instead require a monotone structure on $f$, or on the drift $(\lambda(U-\alpha)-\delta k, f(k,y))$. The numerical study we give below in \S \ref{sec:numerics:2} suggests that a classical solution exists for a linear choice of $f$. 
\begin{prop}
	If $f$ is $C^1$, then there is at most one classical solution to \eqref{master:Y}, and it is decreasing in $k$.
\end{prop}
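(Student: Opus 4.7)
The plan is to follow a monotonicity argument in the spirit of \cite{bertucci2021monotone,lions2007cours}, exploiting the fact that $\partial_U[\lambda(U-\alpha)-\delta k]=\lambda>0$ and that the source $g(k,y):=h(\tfrac{p}{k+y+\epsilon}-c)^+$ is non-increasing in $k$. Let $U^1,U^2$ be two classical solutions of \eqref{master:Y}, set $W:=U^1-U^2$, and subtract the two equations. Using the decomposition $\lambda(U^1-\alpha)U^1_k-\lambda(U^2-\alpha)U^2_k=\lambda W\,U^1_k+\lambda(U^2-\alpha)W_k$, we obtain the linear first-order PDE
\[
-(r+\delta)W+\lambda U^1_k\,W+\bigl[\lambda(U^2-\alpha)-\delta k\bigr]W_k+f(k,y)W_y=0.
\]
This step is purely algebraic and does not use monotonicity of $U^i$ in $k$.

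Second, I would establish that any classical solution is decreasing in $k$. The cleanest route is the characteristic/representation formula: along the forward flow $\dot k=\lambda(U-\alpha)-\delta k$, $\dot y=f(k,y)$, any classical solution satisfies $U(k_0,y_0)=\int_0^\infty e^{-(r+\delta)t}g(k(t),y(t))\,dt$. Since $g$ is non-increasing in $k$, comparing flows starting from $k_0<\tilde k_0$ (with the same $y_0$) and using Gronwall on the ODE for $k$ yields $k(t)\le \tilde k(t)$ for all $t\ge 0$ and hence $U(k_0,y_0)\ge U(\tilde k_0,y_0)$. The delicate point is that, a priori, the drift in $k$ depends on $U$ itself; one passes through a fixed-point/iteration argument, or, equivalently, differentiates the equation in $k$ and applies a maximum principle to $V:=\partial_k U$ using that $\partial_k g\le 0$. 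The non-smoothness of the positive part in $g$ is harmless, handled by a standard regularization $g_\eta$ and passage to the limit.

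Third, with $U^1_k\le 0$ in hand, I would close the uniqueness argument using the linear PDE for $W$. Fix $(k_0,y_0)$ and let $(k(t),y(t))$ be the characteristic of $U^2$, i.e.\ $\dot k=\lambda(U^2-\alpha)-\delta k$, $\dot y=f(k,y)$, $(k(0),y(0))=(k_0,y_0)$. The chain rule combined with the PDE gives
\[
\dot W(t)=\bigl[(r+\delta)-\lambda U^1_k(k(t),y(t))\bigr]W(t),
\]
and monotonicity forces $(r+\delta)-\lambda U^1_k\ge r+\delta>0$. Therefore $|W(k(t),y(t))|\ge |W(k_0,y_0)|\,e^{(r+\delta)t}$. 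Since both $U^i$ are globally Lipschitz, $W$ is bounded, which forces $W(k_0,y_0)=0$. Running this for every $(k_0,y_0)$ where the characteristic exists for all $t\ge 0$ gives $W\equiv 0$ by continuity.

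The main obstacle is the long-time behavior of the characteristics of $U^2$: one must ensure they remain in the admissible domain $\{k\ge 0,\, y\ge 0\}$ (or at least do not blow up) for all forward times, so that the exponential-growth estimate can be iterated indefinitely against the sup-norm bound on $W$. This is the very place where a structural condition on $f$ is needed, consistent with the paper's remark that some monotonicity of $f$ (or of the joint drift $(\lambda(U-\alpha)-\delta k,\,f(k,y))$) should be imposed; for instance, if $f$ is Lipschitz with $f(k,0)\ge 0$ and has linear growth, forward invariance and non-explosion of the flow follow from Gronwall and close the argument. A secondary technical point is the non-$C^1$ behavior of $(\cdot)^+$ at zero, which as noted above is dispatched by smoothing the source and passing to the limit in the linearized equation for $W$.
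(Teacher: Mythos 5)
Your architecture differs from the paper's in a way that matters. You first try to prove $\partial_k U\le 0$ for an arbitrary classical solution, and only then run a linearized transport argument for uniqueness. The paper instead runs a single doubling-of-variables argument on $W(k_1,k_2,y_1,y_2)=\bigl(U^1(k_1,y_1)-U^2(k_2,y_2)\bigr)(k_1-k_2)$: pairing the difference of the two equations with $(k_1-k_2)$ keeps the nonlinearity in symmetrized form and produces the good term $-\lambda(U^1-U^2)^2$, a maximum principle then gives $W\le 0$, and both uniqueness (by a continuity argument) and monotonicity in $k$ (taking $U^1=U^2$) drop out of the same inequality, with no prior monotonicity needed. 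Your step 3 is correct as far as it goes: the identity $\dot W=[(r+\delta)-\lambda\,\partial_k U^1]\,W$ along the characteristics of $U^2$ is right, and the exponential-growth-versus-boundedness conclusion is a legitimate alternative to the paper's maximum principle, modulo the forward-invariance of the domain that you yourself flag. But it needs $\partial_k U^1\le 0$ as an input, so everything hinges on your step 2, and that is where the gap is.

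The comparison-of-flows argument for monotonicity does not work as stated. The source $g(k,y)=h\bigl(\tfrac{p}{k+y+\epsilon}-c\bigr)^+$ is decreasing in $k+y$, not in $k$ alone, and the two trajectories started from $k_0<\tilde k_0$ with the same $y_0$ have \emph{different} $y$-components, because $\dot y=f(k,y)$ depends on $k$. Hence $k(t)\le\tilde k(t)$ does not yield $g(k(t),y(t))\ge g(\tilde k(t),\tilde y(t))$: with the paper's own choice $f(k,y)=-ak-by+\gamma$, $a>0$, a larger $k$ pushes $y$ down and the sign of $(k(t)+y(t))-(\tilde k(t)+\tilde y(t))$ is not controlled. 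The alternative you mention --- differentiating the equation in $k$ and applying a maximum principle to $V=\partial_k U$ --- meets the same obstruction: the differentiated equation contains the cross term $\partial_k f\,\partial_y U$, which has no sign without additional structure on $f$ and on $\partial_y U$. In short, the monotonicity you take as a stepping stone is essentially as hard as the statement itself; the point of the paper's symmetrized $W$ is precisely to avoid having to establish it first.
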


\begin{proof}
	Let $U^1$ and $U^2$ be two solutions and consider
	\[
	W(k_1, k_2, y_1, y_2) = ( U^1(k_1, y_1)-U^2(k_2, y_2) ) (k_1-k_2).
	\]
	We have 
	\begin{align*}
		\partial_{k_1} W &= \partial_{k_1} U^1(k_1, y_1) (k_1-k_2) +U^1 (k_1, y_1) -U^2(k_2, y_2), \\
		\partial_{k_2} W &= - \partial_{k_2} U^2(k_2, y_2) (k_1-k_2) - (U^1 (k_1, y_1) -U^2(k_2, y_2) ),  \\
		\partial_{y_1} W &= \partial_{y_1} U^1(k_1, y_1) (k_1-k_2) , \\
		\partial_{y_2} W &= - \partial_{y_2} U^2(k_1, y_1) (k_1-k_2)
	\end{align*} 
	and 
	\begin{align*}
		(r+\delta) W &= (r+\delta) U^1 (k_1-k_2) - (r+\delta) U^2 (k_1-k_2) \\
		&= \bigg[ ( \lambda(U^1-\alpha) -\delta k_1) \partial_{k_1} U^1
		-  ( \lambda(U^2-\alpha) -\delta k_2) \partial_{k_2} U^2 
		+ f(k_1, y_1) \partial_{y_1} U^1 - f(k_2, y_2) \partial_{y_2} U^2 \\
		& \quad  +h\Big( \frac{p}{k_1+ y_1 +\epsilon} -c\Big)_+ - 
		h\Big( \frac{p}{k_2+ y_2 +\epsilon} -c\Big)_+ \bigg] (k_1-k_2) \\
		&= ( \lambda(U^1-\alpha) -\delta k_1) \partial_{k_1} W
		+ ( \lambda(U^2-\alpha) -\delta k_2) \partial_{k_2} W \\
		&\quad - ( \lambda(U^1-\alpha) -\delta k_1) (U^1-U^2) 
		+ ( \lambda(U^2-\alpha) -\delta k_2) (U^1-U^2) \\
		&\quad + f(k_1, y_1) \partial_{y_1} W + f(k_2, y_2) \partial_{y_2} W \\
		& \quad +\bigg[ h\Big( \frac{p}{k_1+ y_1 +\epsilon} -c\Big)_+ - 
		h\Big( \frac{p}{k_2+ y_2 +\epsilon} -c\Big)_+ \bigg] (k_1-k_2) \\
		&\leq ( \lambda(U^1-\alpha) -\delta k_1) \partial_{k_1} W
		+ ( \lambda(U^2-\alpha) -\delta k_2) \partial_{k_2} W \\
		& \quad - \lambda(U^1-U^2)^2 + \delta W  
		+ f(k_1, y_1) \partial_{y_1} W + f(k_2, y_2) \partial_{y_2} W ,
	\end{align*}
	which yields 
	\[
	-r W + ( \lambda(U^1-\alpha) -\delta k_1) \partial_{k_1} W
	+ ( \lambda(U^2-\alpha) -\delta k_2) \partial_{k_2} W + f(k_1, y_1) \partial_{y_1} W + f(k_2, y_2) \partial_{y_2} W \leq 0. 
	\]
	Hence a maximum principle result gives $W\leq 0$. This gives the claim because, if there was a point $(k_1,y_1)$ where $U_1(k_1, y_1) \neq U_2(k_1, y_1)$, then (by continuity) there would be a point $(k_2,y_2)$ in a neighborhood for which  $W(k_1, k_2, y_1, y_2) >0$.  
\end{proof}

\subsection{Stationary state}
As in the previous section, we want to analyze the model's stationary states. As those states obviously depend on the nature of the function $f$, we fix here a form for $f$ and show how such a study can be made in this case.\\

We remark that $U(k,y)\geq 0$ because of the positive part. Thus the vector field for $k$, $\lambda (U(k,y)-\alpha) -\delta k$, is positive in $k=0$ if $\alpha \leq 0$. While if $\alpha >0$, we have to ensure that $U(0, y) \geq \alpha$ for any $y$. This holds true if $f(0,y) \leq 0$ and $U$ is decreasing in $y$ (or the opposite), and 
\[
\frac{hp}{y_{max} + \epsilon} -hc- \alpha(r+ \delta) \geq 0.
\]
Recalling that $\bar{c} = hc + \alpha (r+ \delta)$, it gives $y_{max} \leq \frac{hp}{\bar{c}} -\epsilon$. 
From the identity
\[ 
U(k,y) = \int_0^\infty e^{-(r+\delta)t} h\left(\frac{p}{K_t +Y_t +\epsilon} -c\right)_+ dt,
\]
and the fact $Y_t, K_t \geq 0$, we get $U(k,y) \leq \frac{h}{r+\delta}\left(\frac{p}{\epsilon} -c\right)$ and thus 
\[
\lambda(U-\alpha) -\delta k \leq 0  \quad \mbox{ if } 
k \geq \frac{\lambda}{\delta} \left[\frac{h}{r+\delta}\left(\frac{p}{\epsilon} -c\right) -\alpha\right] = k_{max}.
\]

The stationary state $(k^*, y^*)$ is such that the vector field is zero and so, evaluating the master equation on that point and calling $u^* =U(k^*, y^*)$,  we obtain the system 
\be 
\label{eq:37}
\begin{cases}
	\lambda( u^* -\alpha ) -\delta k^*=0, &\\
	f(k^*, y^*)=0, &\\
	-(r+\delta) u^* + h\left(\frac{p}{k^* +y^* +\epsilon} -c\right)_+ = 0. &
\end{cases}
\ee 
Observe that, if $\frac{p}{k^* +y^* +\epsilon} -c \leq 0$, the solution is $u^*=0$ and $k^* =- \frac{\lambda \alpha}{\delta}$, which is negative if $\alpha$ is positive. Thus we assume $\frac{p}{k^* +y^* +\epsilon} -c \geq 0$. Replacing $\bar{u} = u^*-\alpha$, we get 
\be 
\label{eq:38}
\begin{cases}
	\lambda \bar{u} -\delta k^*=0, &\\
	f(k^*, y^*)=0, &\\
	-(r+\delta) \bar{u} + \frac{hp}{k^* +y^* +\epsilon} -\bar{c} = 0. &
\end{cases}
\ee 
\\

We will consider a simple form of $f$ linear: with $a, b >0$
\be 
\label{eq:effe}
f(k,y) = -a k -b y +\gamma.
\ee
The constraints are satisfied if $\frac{\gamma}{a} \geq k_{max}$ and $\frac{-b y_{max}+\gamma}{a}\leq 0$. With this $f$ the above system provides the equation $y^* = \frac{-a k^* +\gamma}{b}$ and 
\be 
\delta k^2 + \left(\delta \frac{\frac{\gamma}{b}+\epsilon}{1-\frac{a}{b}} + \frac{\bar{c} \lambda}{r+\delta} \right) k   +\frac{\bar{c} \lambda }{r+\delta} \frac{\frac{\gamma}{b}+\epsilon}{1-\frac{a}{b}} 
- \frac{ \lambda}{r+\delta} \frac{hp}{1-\frac{a}{b}} =0,
\ee 
if $a\neq b$, which is equation \eqref{eq:2} with $\epsilon$ replaced by $\frac{-\frac{\gamma}{b}+\epsilon}{1-\frac{a}{b}}$ and $p$ replaced by  $\frac{p}{1-\frac{a}{b}}$. Hence the solution is 
\be 
k^*= \frac{ -\delta\frac{+\frac{\gamma}{b}+\epsilon}{1-\frac{a}{b}}-\frac{\bar{c} \lambda}{r+\delta}+ \sqrt{\left(\delta \frac{\frac{\gamma}{b}+\epsilon}{1-\frac{a}{b}}  - \frac{\bar{c} \lambda}{r+\delta}\right)^2 +4 \frac{\delta \lambda }{r+\delta}\frac{hp}{1-\frac{a}{b}}}}{2\delta} .
\ee 
and exists and is positive if 
\[
\frac{\bar{c}  \left(\frac{\gamma}{b}+\epsilon\right) -hp}{1-\frac{a}{b}} <0.
\]
If $a=b$ the solution is instead $k^*= 
\frac{\lambda}{\delta(r+\delta)} \left(\frac{hp}{\frac{\gamma}{a} +\epsilon} -\bar{c} \right)$. 


\subsection{Monopoly} 

We consider here the case where the production of the renewable energy is subject to a monopoly regime. In analogy to \eqref{master2}, the master equation with the reserve becomes 
\be 
\label{master4}
-(r+\delta)U + \left(\lambda( U - \alpha) -\delta k\right) \partial_k U + f(k,y) \partial_y U  +  \frac{hp (y+ \epsilon)}{(k+y+\epsilon)^2} -hc =0.
\ee
The equation is derived formally as in \S \ref{subsec:const_sub} , by considering  in addition the reserve $Y_t$ which is treated as a given function of time, in the optimization problem for the unique producer of the renewable energy. Roughly speaking, $\epsilon$ is replaced by $y+\epsilon$. 


In this case, the stationary state is given by a triple $(k^*, y^*, u^*)$. Similarly to the competitive case \eqref{eq:37}, it is provided by the system 
\be 
\label{eq:43}
\begin{cases}
	\lambda( u^* -\alpha ) -\delta k^*=0, &\\
	f(k^*, y^*)=0, &\\
	-(r+\delta) u^* + \frac{hp (y^*+ \epsilon)}{(k^*+y^*+\epsilon)^2} -hc = 0. &
\end{cases}
\ee 
In the case where $f$ is given by \eqref{eq:effe}, we obtain that $k^*$ is a root of the polynomial  
\begin{equation*} 
	\delta k^3 + \left( \frac{2(\epsilon +\!\frac{\gamma}{b})\delta}{1-\frac{a}{b}} + \frac{\bar{c} \lambda}{r\!+\!\delta}  \right) k^2 
	+ \left( \frac{\delta (\epsilon+\!\frac{\gamma}{b}) ^2}{\big(1-\frac{a}{b}\big)^2} + \frac{2(\epsilon+\!\frac{\gamma}{b} )\bar{c}\lambda}{(r\!+\!\delta) \big(1-\frac{a}{b}\big) } + \frac{a}{b} hp \frac{\lambda}{(r\!+\!\delta)\big(1-\frac{a}{b}\big)^2} \right) k 
	+ \frac{\lambda(\epsilon +\!\frac{\gamma}{b})\left( \bar{c} (\epsilon +\!\frac{\gamma}{b}) - \!h p    \right)}{(r+\delta)\big(1-\frac{a}{b}\big)^2} .
\end{equation*}

Similarly to Lemma \ref{lem:4}, we have the following:
\begin{lem}	
	For every admissible value of the parameters $\alpha, c$ and every choice of $f$ invertible, we have 
	\be 
	k^*_{mono} < k^*_{agents} ,
	\ee 
	where $k^*_{mono}$ and $k^*_{agents}$ are the equilibrium capacities in the monopoly regime and in the competitive small agents regime, respectively. 
\end{lem}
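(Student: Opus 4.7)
The plan is to mimic the proof of Theorem \ref{lem:4}, comparing the stationary systems \eqref{eq:37} and \eqref{eq:43}, which differ only in their third equation.

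First I would use the invertibility of $f$ in the second variable to solve $f(k^*,y^*)=0$ for $y^*=Y(k^*)$, reducing both systems to two equations in the pair $(k^*,u^*)$. The first equation is identical in both regimes and gives
\[
u^* \;=\; \alpha + \frac{\delta}{\lambda}\,k^*,
\]
a strictly increasing linear function of $k^*$. Substituting $y^*=Y(k^*)$ into the third equation yields, in the competitive case,
\[
(r+\delta)u^* \;=\; h\!\left(\frac{p}{k^*+Y(k^*)+\epsilon}-c\right)_{\!+},
\]
whereas in the monopoly case,
\[
(r+\delta)u^* \;=\; \frac{hp\bigl(Y(k^*)+\epsilon\bigr)}{\bigl(k^*+Y(k^*)+\epsilon\bigr)^{2}} - hc.
\]
For admissible parameters (so that the positive part may be dropped and $k^*+Y(k^*)$ is strictly increasing in $k^*$, which for the linear choice \eqref{eq:effe} reduces to $a<b$), both right-hand sides are strictly decreasing functions of $k^*$.

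The key pointwise comparison is the same as in Theorem \ref{lem:4}: for every $k>0$ and $y\geq 0$,
\[
\frac{p\bigl(y+\epsilon\bigr)}{(k+y+\epsilon)^{2}} \;<\; \frac{p}{k+y+\epsilon},
\]
since $y+\epsilon<k+y+\epsilon$. Evaluating at $y=Y(k^*)$, this means that at every positive $k^*$ the monopoly decreasing curve lies strictly below the competitive one. Since the strictly increasing line $\alpha+\delta k^*/\lambda$ must meet each decreasing curve at exactly one point, its intersection with the lower (monopoly) curve occurs at a strictly smaller abscissa than with the upper (competitive) one, giving $k^*_{mono}<k^*_{agents}$.

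The main subtlety is justifying that the map $k^*\mapsto k^*+Y(k^*)$ is strictly increasing, because otherwise the competitive right-hand side need not be monotone in $k^*$ and the one-dimensional graphical argument could fail. Implicit differentiation of $f(k^*,Y(k^*))=0$ gives $Y'(k^*)=-\partial_k f/\partial_y f$, so the monotonicity condition reads $\partial_y f>\partial_k f$ (with the appropriate sign of $\partial_y f$); this is precisely the admissibility built into \eqref{eq:effe} via $a<b$ and can be imposed as a mild additional hypothesis in the general case. Once this is granted, the argument is otherwise identical to that of Theorem \ref{lem:4}.
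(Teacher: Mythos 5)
Your proof follows essentially the same route as the paper's: invert $f$ to write $y^*=\phi(k^*)$, observe that the first equation gives $u^*$ as the same strictly increasing function of $k^*$ in both regimes, and conclude from the pointwise inequality $\frac{p(\phi(k)+\epsilon)}{(k+\phi(k)+\epsilon)^2}<\frac{p}{k+\phi(k)+\epsilon}$ that the monopoly curve lies below the competitive one. Your extra remark on the monotonicity of $k\mapsto k+\phi(k)$ is a legitimate point that the paper's proof glosses over (note that for the linear choice \eqref{eq:effe} the case $a=b$ also works, since the competitive right-hand side is then constant and the graphical argument still goes through), but otherwise the argument is the same.
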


\begin{proof} 
	As in Lemma \ref{lem:4},  $u^*$ is a strictly increasing function of $k^*$, which is the same in the two regimes. In addition, we get $y^*$ is a function of $k^*$, say $y^*=\phi(k^*)$, since $f$ is invertible and does not depend on $u$.  Thus we have just to compare the third equation in the systems: since 
	\[
	\frac{p(\phi(k)+\epsilon)}{(k+ \phi(k) +\epsilon)^2} < \frac{p}{k+\phi(k) + \epsilon} \qquad 
	\forall k>0,
	\] 
	we obtain the claim. 
\end{proof}






\subsection{Numerical simulations} 
\label{sec:numerics:2}
To simulate the master equation \eqref{master:Y} we propose a finite difference scheme which takes into account the monotonicity in $k$ of $U$. 
The master equation \eqref{master:Y} is a scalar PDE in two space variables. The problem is that it is non linear and has no boundary conditions, thus usual finite difference methods are not directly applicable; see e.g. \cite{AchdouLauriere} for an overview of numerical methods for mean field games. To simulate it, we adapt the upwind scheme for linear transport equations, exploiting the monotonicity in $k$ of $U$, which ensures that the scheme is monotone. We hence simulate the equation without boundary conditions, using the monotonicity of the coefficients which ensure that there is an invariant domain where the equation can thus be numerically computed.

More precisely, We consider a uniform grid $(k_i, y_j)_{1\leq i\leq n, 1\leq j\leq m}$ which discretizes $[0, x_{max}]\times [0, y_{max}]$ with steps $\Delta k$ and $\Delta y$, for suitable maximum points $x_{max} $ and $y_{max}$, and consider the approximation 
\[
u^{n,m}_{i,j} \approx U(x_i,y_j).
\]
For $n$ and $m$ large enough, $u = (u_{i,j})_{1\leq i\leq n, 1\leq j\leq m}$ is a solution of the equation 
\be 
\begin{split}
	0 &=-(r+\delta) u_{i,j} 
	+ (\lambda( u_{i,j} -\alpha) -\delta k_i)_+
	\frac{u_{i+1,j}-u_{i,j}}{\Delta k}  
	- (\lambda( u_{i,j} -\alpha) -\delta k_i)_-
	\frac{u_{i,j}-u_{i-1,j}}{\Delta k} \\ 
	& + f_+(k_i, y_j) \frac{u_{i,j+1} - u_{i,j}}{\Delta y} 
	- f_-(k_i, y_j) \frac{u_{i,j} - u_{i,j-1}}{\Delta y}
	+ h\bigg( \frac{p}{k_i+y_j +\epsilon} -c\bigg)_+ ,
\end{split} 
\ee
for $0<i<n$, $0<j<m$, while the boundary points are such that (choosing properly $x_{max} $ and $y_{max}$) $\lambda( U -\alpha) -\delta k_i \leq 0$ in $x_{max}$ and $\geq 0$ in $0$, and 
$f(k, y_{max}) \leq 0$, $f(k, 0) \geq 0$. Splitting positive and negative part in this way is typical of upwind shemes for the transport equation.
This equation is in the form $F_{i,j}(u) =0$ and  is solved by Newton method, which consists of the iteration 
\[
u^{N+1} = u^N - (DF)^{-1} F(u^N).  
\]
This is solvable because $DF$ is invertible since it is diagonally strictly dominant (even if not properly defined) due to the fact that $U$ is decreasing in $k$. 
Indeed, we have 
\begin{align*}
	\frac{\partial F_{i,j}}{\partial u_{i,j}} 
	&= -(r+\delta) - | \lambda( u_{i,j} -\alpha) -\delta k_i | \frac{1}{\Delta k} - |f(k_i, y_j)| \frac{1}{\Delta y} \\
	& \quad+ \lambda \mathbbm{1}(\lambda( u_{i,j} -\alpha) -\delta k_i >0)\frac{u_{i+1,j}-u_{i,j}}{\Delta k} 
	+ \lambda \mathbbm{1}(\lambda( u_{i,j} -\alpha) -\delta k_i <0) \frac{u_{i,j}-u_{i-1,j}}{\Delta k} \\
	\frac{\partial F_{i,j}}{\partial u_{i+1,j}} &=  \frac{1}{\Delta k}(\lambda( u_{i,j} -\alpha) -\delta k_i )_+, \qquad\qquad  
	\frac{\partial F_{i,j}}{\partial u_{i-1,j}} =  \frac{1}{\Delta k}(\lambda( u_{i,j} -\alpha) -\delta k_i )_-, \\
	\frac{\partial F_{i,j}}{\partial u_{i,j+1}} &= \frac{1}{\Delta y}f_+(k_i, y_j) , \qquad\qquad  
	\frac{\partial F_{i,j}}{\partial u_{i,j-1}} = \frac{1}{\Delta y}f_-(k_i, y_j).
\end{align*}
If $u$ is decreasing in $k$, all the terms in $\frac{\partial F_{i,j}}{\partial u_{i,j}}$ are negative and thus 
\begin{align*}
	\bigg| \frac{\partial F_{i,j}}{\partial u_{i,j}} \bigg| &- 
	\sum_{(k,l)\neq (i,j)} \bigg| \frac{\partial F_{i,j}}{\partial u_{k,l}} \bigg|  = (r+\delta) \\
	& - \lambda \mathbbm{1}(\lambda( u_{i,j} -\alpha) -\delta k_i >0)\frac{u_{i+1,j}-u_{i,j}}{\Delta k} 
	- \lambda \mathbbm{1}(\lambda( u_{i,j} -\alpha) -\delta k_i <0) \frac{u_{i,j}-u_{i-1,j}}{\Delta k} >0.
\end{align*} 

For the numerical simulations, the idea is to avoid the constraints on $k=k_{max}$ and $y=0$, by simulating the master equation only in the rectangle $[k_0 , k^*]\times [y^*, y_0]$, which turns out (a posteriori) to be invariant,  where the initial conditions are $k_0 < k^*$ and $y_0 > y^*$.	\\

\textbf{Case} $a=b=1$.
In this case, it means that the reserve and the renewable capacity are perfect substitutes, and that when one unit of renewable is built, the reserve is going to decrease by one unit. And then $\gamma$ is the level of capacity (reserve + renewable) in the system at the equilibrium,
\be 
f(k, y) = - (k+y) +\gamma, 
\ee 
so that $k^*$ is given by the formula 
\be 
k^*= 
\frac{\lambda}{\delta(r+\delta)} \left(\frac{hp}{D +\epsilon} -(\bar{c} - \bar{c}_{sub}) \right) , \qquad y^* = \gamma- k^*.
\ee
by considering that the the renewable are subsidized by $\bar{c}_{sub}$.
We stress  that we have always the constraint $y_0 \leq \frac{p}{\bar{c}}-\epsilon$ if $\alpha >0$. 

We use the same parameters as in \S \ref{sec:constantsub_optimizesub} :
\begin{align*}
	r=0.1 /\mbox{year}, \qquad \delta=\ln(2)/10 \mbox{year},  \qquad \lambda=5 \frac{\text{MW}^2}{\mbox{\euro  year}^2} , \qquad \epsilon=0.1 \text{ MW}, \qquad p = 6.5*10^6 \mbox{ \euro}
\end{align*}
To achieve a target optimal capacity of renewable, the level of subvention should be:    
\[
\bar{c}- \bar{c}_{sub} = \frac{hp}{D+\epsilon} - k^* \frac{\delta (r+\delta)}{\lambda}. 
\]
%
%
%
%
%
%
%
%
%
%
%
We choose
\[
k_0 = 30 \text{ GW}, \qquad Y_0 = 100 \text{ GW}, 
\qquad k^* = 60 \text{ GW}, \qquad y^* = 70 \text{ GW}
\] 
and make the choice $a=b=1$, $\gamma = 130$ GW, thus we get that the optimal $\bar{c}- \bar{c}_{sub} $ is 150,000 \euro/MW. As $\bar{c}=282,040$, it means that the level of subvention is approximately half of the total cost of the renewable to guarantee to reach the desired equilibrium. \\

The following figures \ref{figure:5} and \ref{figure:6} are the graphics of  the solution to the master equation (in $[k_0, k^*]\times [y^*, y_0]$), 
the dynamics of $K_t$ and $Y_t$, the curve $(K_t, Y_t)$ parameterized by time in the space $(k,y)$, and the quantity $K_t+Y_t$.

\begin{figure}
	\centering
	\begin{subfigure}{.5\textwidth}
		\centering
		\includegraphics[scale=0.3]{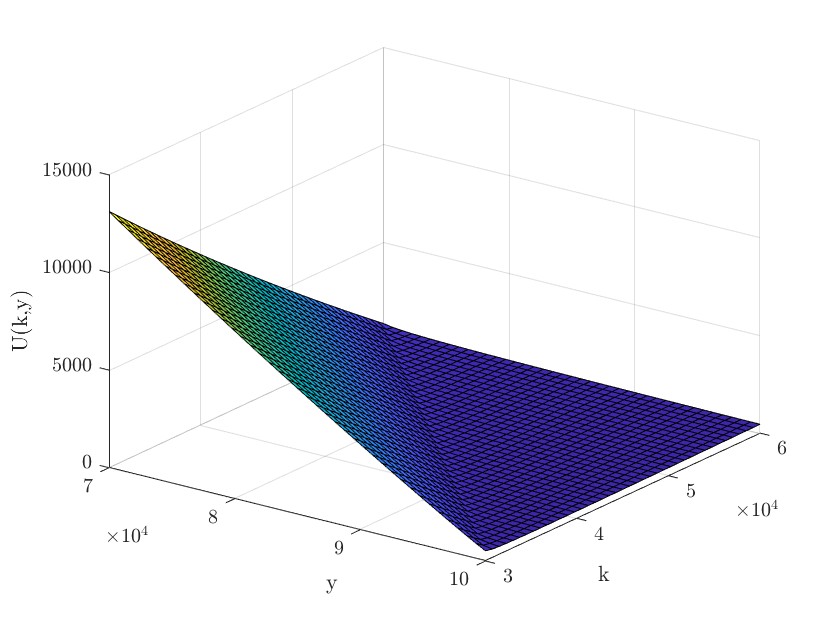}
	\end{subfigure}%
	\begin{subfigure}{.5\textwidth}
		\centering
		\includegraphics[scale=0.3]{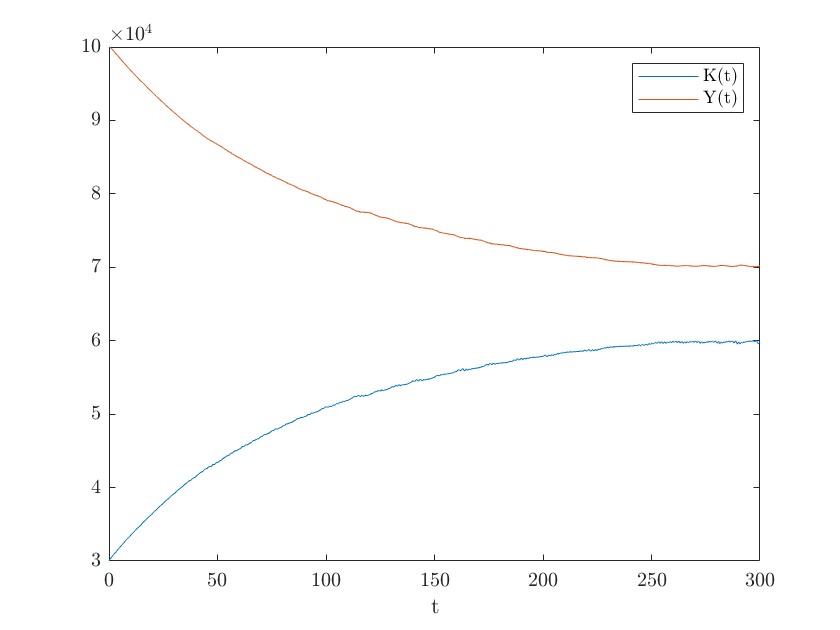}
	\end{subfigure}
	
	\caption{Value function U with respect of the renewable and reserve capacities (left) and time evolution of $K_t$ and $Y_t$ in MW (right)}
	\label{figure:5}
\end{figure}

\begin{figure}
	\centering
	\begin{subfigure}{.5\textwidth}
		\centering
		\includegraphics[scale=0.3]{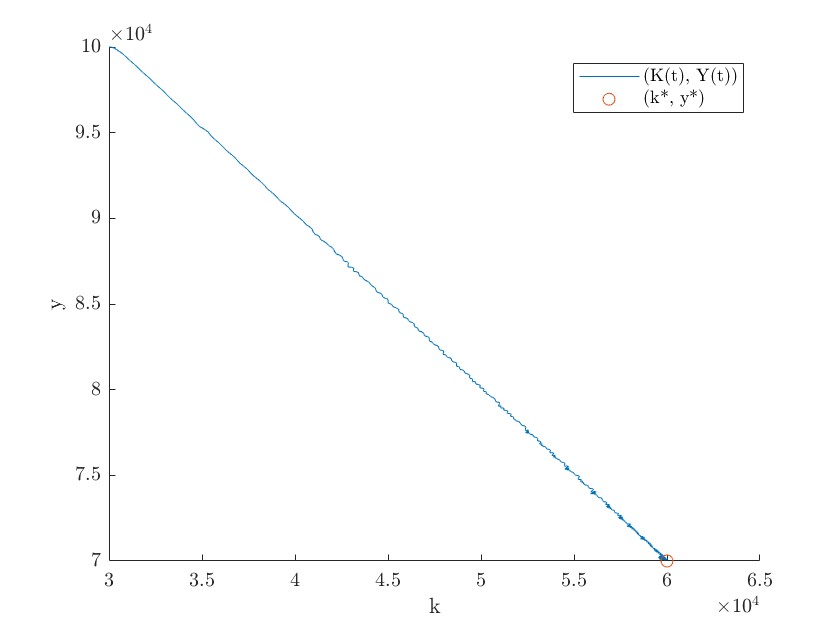}
	\end{subfigure}%
	\begin{subfigure}{.5\textwidth}
		\centering
		\includegraphics[scale=0.3]{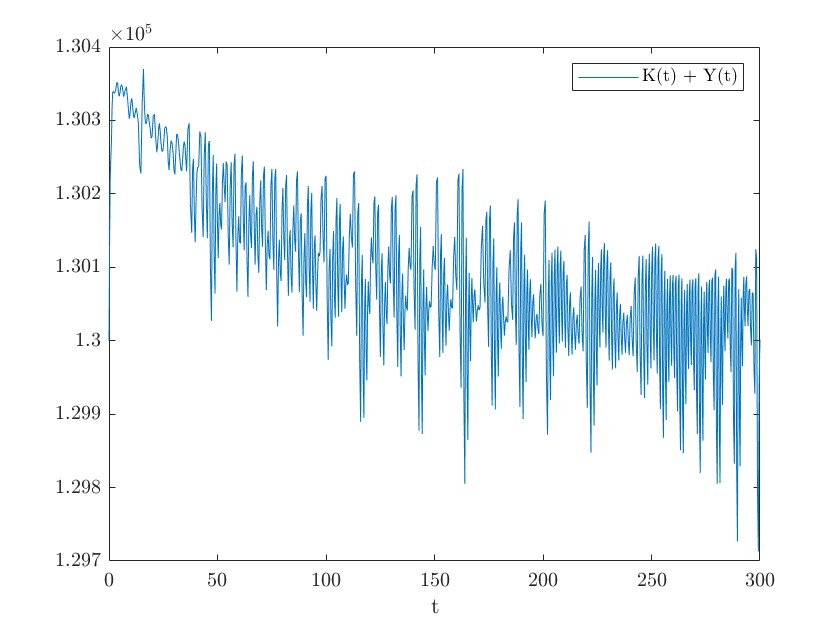}
	\end{subfigure}
	
	\caption{Dynamic of $Y_t$ and $K_t$ in MW (left) and time evolution of total capacities $K_t+Y_t$ in MW (right)}
	\label{figure:6}
\end{figure}

We see that $U$ is decreasing in $k$, there is convergence to equilibrium (quite slowly, though, $T=300$ years in the figures), and the sum of the two capacities stays above the demand. 
The numerical error at the terminal time is given by  
$K_{300}-k^* = -194$ MW,  
$Y_{300} - y^* = 195$ MW.

\vspace{0.5cm}

\textbf{Case} $a=2/3$, $b=1$.
In this case, the reserve and the renewable capacity are not perfect substitutes. When one unit of renewable is built, the reserve is only decreasing by one third of a unit in order to cope with variability of renewable production.

With the choice $a=2/3$, $b=1$, $\gamma = 130$ GW, we get the optimal $\bar{c}- \bar{c}_{sub} = 129,900$ \euro/MW and $y*=90$ GW. In this case, the level of subvention to provide to the new renewable technology is higher than in the previous case because the spot price is lower due to the higher level of reserve. We have convergence to equilibrium in a much shorter time, $T=15$ years in the figures, and the sum of the two capacities increases over time until the equilibrium value of 150 GW.  
The numerical error at the terminal time is  
$K_{50}-k^* = -28$ MW,  
$Y_{50} - y^* = 21$ MW, 
and we have the following figures.


\begin{figure}
	
	\centering
	\begin{subfigure}{.5\textwidth}
		\centering
		\includegraphics[scale=0.3]{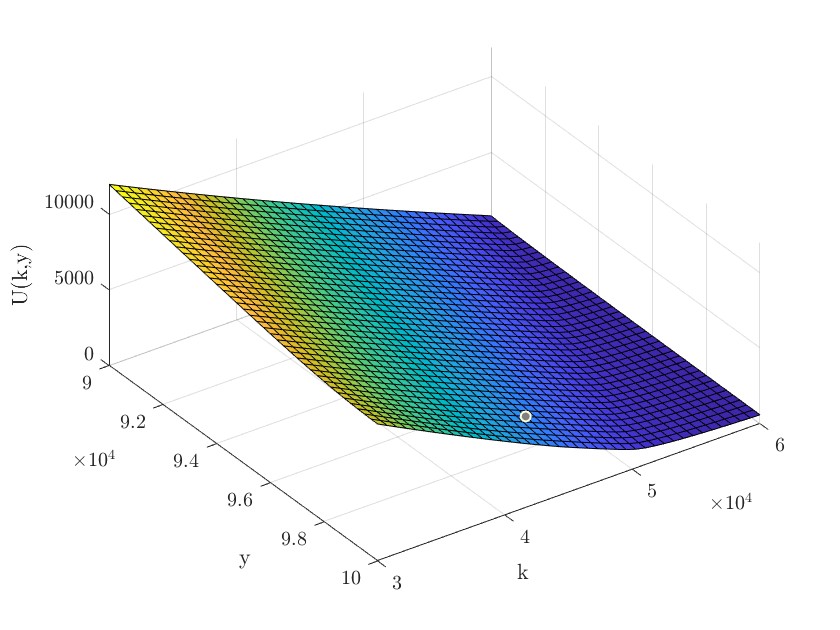}
	\end{subfigure}%
	\begin{subfigure}{.5\textwidth}
		\centering
		\includegraphics[scale=0.3]{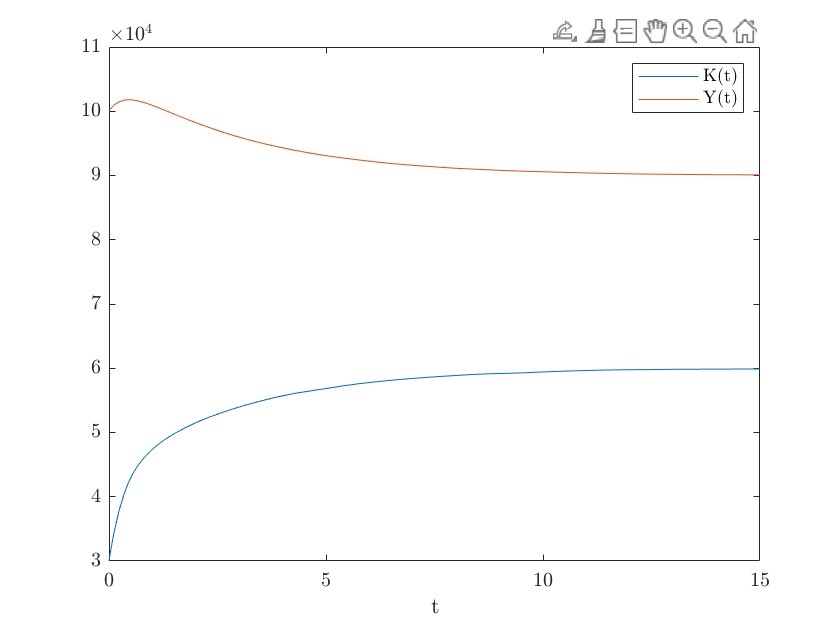}
	\end{subfigure}
	
	\caption{Value function U with respect of the renewable and reserve capacities (left) and time evolution of $K_t$ and $Y_t$ in MW (right)}
\end{figure}

\begin{figure}
	
	\centering
	\begin{subfigure}{.5\textwidth}
		\centering
		\includegraphics[scale=0.3]{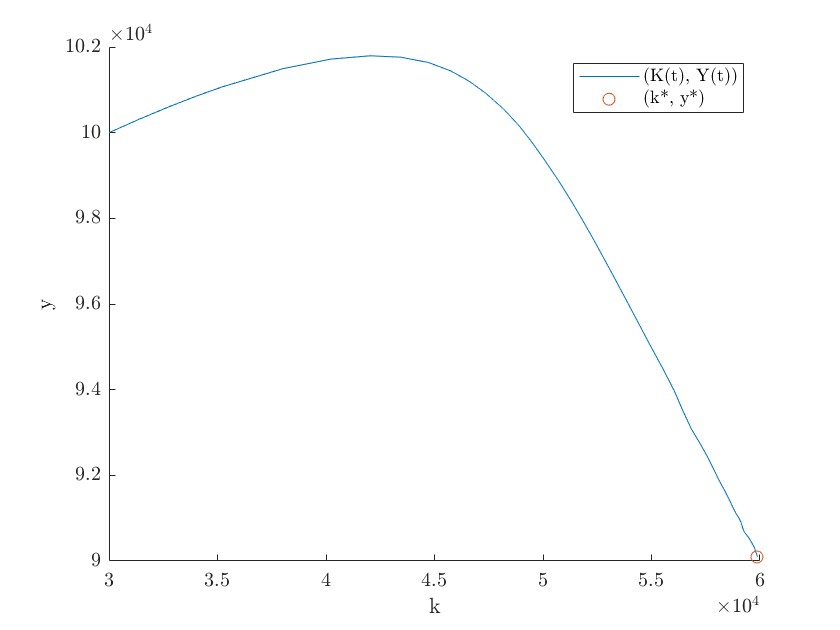}
	\end{subfigure}%
	\begin{subfigure}{.5\textwidth}
		\centering
		\includegraphics[scale=0.3]{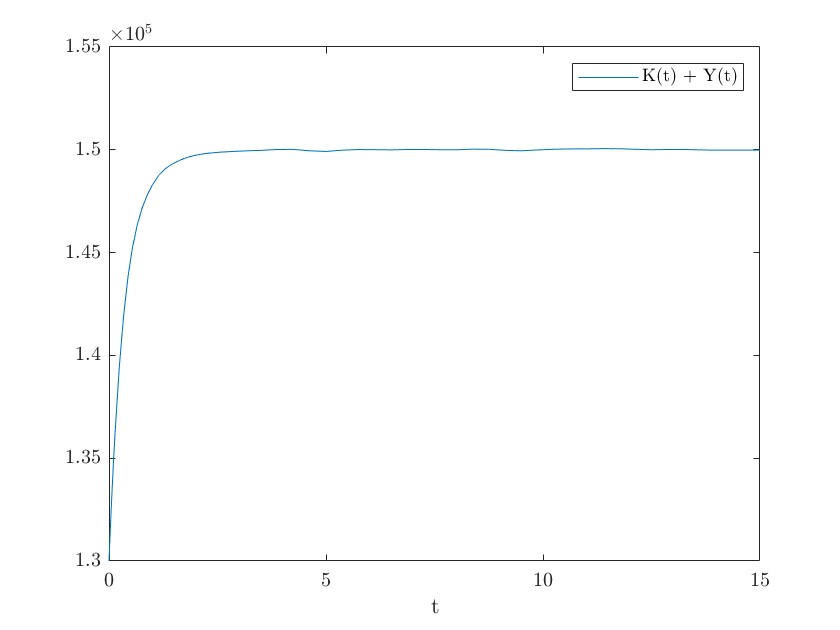}
	\end{subfigure}
	
	\caption{Dynamic of $Y_t$ and $K_t$ in MW (left) and time evolution of total capacities $K_t+Y_t$ in MW (right)}
\end{figure}







\end{document}